\numberwithin{equation}{section}
\DeclareFontFamily{OT1}{rsfs}{}
\DeclareFontShape{OT1}{rsfs}{n}{it}{<-> rsfs10}{}
\DeclareMathAlphabet{\mathscr}{OT1}{rsfs}{n}{it}
\theoremstyle{plain}
\newtheorem{theorem}{Theorem}[section]
\newtheorem{lemma}[theorem]{Lemma}
\theoremstyle{definition}
\newtheorem{remark}[theorem]{Remark}
\newcommand{\abs}[1]{\left\vert#1\right\vert}
\newcommand{\norm}[1]{\Vert#1\Vert}
\newcommand\R{\mathbb{R}}
\def \Re {\,{\rm Re}\,}
\def \supp {\,{\rm supp}\,}
\begin{document}
\title[Sharp $L^p$ decay of oscillatory integral operators]{Sharp $L^p$ decay estimates for degenerate and singular oscillatory integral operators: Homogeneous polynomial phases}

\author{Shaozhen Xu}
\address{Department of Mathematics, Nanjing University, China, 210093}
\email{shaozhen@nju.edu.cn}

\subjclass[2010]{42B20 47G10}

\begin{abstract}
In this paper, we consider the degenerate and singular oscillatory integral operator with a singular kernel which is not a Calder\'{o}n-Zygmund kernel and satisfies suitable size and derivative conditions related to a real parameter $\mu$. For any given homogeneous polynomial phases, except monomial phases, of degreee $n$, we give the range of $p$ for which the sharp decay rate $-\frac{1-\mu}{n}$ on $L^2$ spaces can be preserved on $L^p$ spaces. 
\end{abstract}
\keywords{}
\maketitle
\tableofcontents

\section{Introduction}
Based on the previous work \cite{xu2021sharp}, we continue to discuss the operator
\begin{equation}\label{OSO}
	Tf(x)=\int_{\R} e^{i\lambda S(x,y)}K(x,y)\psi(x,y)f(y)dy,
\end{equation}
where $K(x,y)$ is a $C^2$ function away from the diagonal satisfying
\begin{equation}\label{SinCond}
	\abs{K(x,y)}\leq E\abs{x-y}^{-\mu}, \ \ \abs{\partial_y^i K(x,y)}\leq E\abs{x-y}^{-\mu-i},
\end{equation}
where the $0<\mu<1$, $E$ is an constant and $i=1, 2$. This kind of operators were introduced in \cite{liu1999model} in which the author also established the sharp $L^2$ decay estimates for \eqref{OSO} with arbitrary homogeneous polynomial phases. Look at the operators \eqref{OSO}, they are closely related to the usual degenerate oscillatory integral operators studied by \cite{phong1992oscillatory}, \cite{phong1994models}, \cite{phong1997newton} and also can be seen as variants of the operators considered in \cite{phong1986hilbert}, \cite{ricci1987harmonic}. Readers may find more backgrounds about these operators as well as the relationship in \cite{pan1998complete}, \cite{pan19972}, \cite{shi2019damping}, \cite{xiao2017endpoint}, \cite{xu2021sharp} and the references therein. Now, returning to our main topic. For \eqref{OSO}, an intersting question is to seek for the range of the exponent $p$ such that the sharp decay rate $-\frac{1-\mu}{n}$ obtained on $L^2$ spaces can be preserved on $L^p$ spaces. Obviously, the range of $p$ is determined by the phase function $S(x,y)$. If $S(x,y)$ is a homongeneous polynomial, it can be written as
\begin{equation}\label{phase}
	S(x,y)=\sum_{k=1}^{n-1}a_kx^{n-k}y^k.
\end{equation}
Observe the term above, the reason we ignore the pure $x-$term or $y-$term is that these pure terms have no effect on norm estimate.
 In \cite{xu2021sharp}, by imposing an additional condition on $K(x,y)$, 
\begin{equation}\label{AddCondi}
	\abs{\partial_x^i K(x,y)}\leq E\abs{x-y}^{-\mu-i}, \tag{AC}
\end{equation}
we proved that
\begin{theorem}\label{PreMain-Result}
If $S(x,y)$ is a homogeneous polynomial of the form \eqref{phase} with $a_1a_{n-1}\neq 0$ and the Hessian is not of the form $c(y-x)^{n-2}$, while
 $K(x,y)$ safisfies \eqref{SinCond} and \eqref{AddCondi}. 
Then the sharp decay estimate
\begin{equation}\label{Main-Inq}
	\|Tf\|_{L^p}\leq C \lambda^{-\frac{1-\mu}{n}}\|f\|_{L^p}
\end{equation}
	holds for $\frac{n-2\mu}{n-1-\mu}\leq p\leq \frac{n-2\mu}{1-\mu}$, and $C$ is independent of $\lambda$.
\end{theorem}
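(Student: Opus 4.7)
The plan is to reduce to a smooth-kernel oscillatory operator via dyadic decomposition and rescaling, establish sharp $L^p$ estimates for the rescaled pieces (using Liu's $L^2$ bound together with endpoint arguments exploiting \eqref{AddCondi}), and sum the pieces by splitting at the critical dyadic scale $2^{-j_\ast}=\lambda^{-1/n}$.

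First I would pick a Littlewood--Paley type partition $\sum_{j\in\Z}\eta(2^j t)=1$ with $\eta\in C_c^\infty((1/2,2))$ and split $T=\sum_{j\geq j_0}T_j$ by inserting $\eta(2^j(x-y))$ into the kernel ($j_0$ fixed by the support of $\psi$). The substitution $(x,y)=(2^{-j}\tilde x,2^{-j}\tilde y)$ combined with homogeneity $S(2^{-j}\tilde x,2^{-j}\tilde y)=2^{-jn}S(\tilde x,\tilde y)$ and the bounds \eqref{SinCond}, \eqref{AddCondi} applied to the renormalized kernel $2^{j\mu}K(2^{-j}\tilde x,2^{-j}\tilde y)\eta(\tilde x-\tilde y)$ gives the scaling identity
\begin{equation*}
\|T_j\|_{L^p\to L^p}=2^{-j(1-\mu)}\|\tilde T_j\|_{L^p\to L^p},
\end{equation*}
where $\tilde T_j$ is an oscillatory operator with uniformly $C^2$-bounded, compactly supported kernel and phase $\lambda_j S$, with $\lambda_j:=\lambda 2^{-jn}$.

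The heart of the argument is then to prove, for each $p\in[p_-,p_+]:=[\tfrac{n-2\mu}{n-1-\mu},\tfrac{n-2\mu}{1-\mu}]$, a bound $\|\tilde T_j\|_{L^p\to L^p}\leq C(1+\lambda_j)^{-\alpha(p)}$ with $n\alpha(p)>1-\mu$. At $p=2$, Liu's theorem applies to the rescaled phase (the hypotheses $a_1a_{n-1}\neq 0$ and Hessian $\neq c(y-x)^{n-2}$ are preserved by scaling) and yields $\alpha(2)=1/n$. At the endpoints $p=p_\pm$, I would run a $TT^*$ iteration: \eqref{AddCondi} gives $\tilde T_j^*$ the symmetric kernel estimates needed, the kernel of $\tilde T_j\tilde T_j^*$ is an oscillatory integral with phase $\lambda_j(S(x,y)-S(x',y))$, and van der Corput applied to $\partial_x\partial_y S$ (a polynomial of degree $n-2$ whose non-degeneracy is exactly the Hessian hypothesis) extracts the required endpoint decay. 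Real interpolation fills in intermediate $p$. The summation then reads, splitting at $j_\ast:=\tfrac{1}{n}\log_2\lambda$ and using $n\alpha(p)>1-\mu$:
\begin{equation*}
\sum_{j\leq j_\ast}2^{-j(1-\mu)}\lambda_j^{-\alpha(p)}=\lambda^{-\alpha(p)}\sum_{j\leq j_\ast}2^{j(n\alpha(p)-(1-\mu))}\lesssim \lambda^{-(1-\mu)/n},
\end{equation*}
and $\sum_{j>j_\ast}2^{-j(1-\mu)}\lesssim 2^{-j_\ast(1-\mu)}=\lambda^{-(1-\mu)/n}$, giving \eqref{Main-Inq} by the triangle inequality.

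The hard part is the endpoint $L^{p_\pm}$ bound on $\tilde T_j$. The $TT^*$ iteration has to be carried out so that the twisted phase $\lambda_j(S(x,y)-S(x',y))$ retains enough non-degeneracy to yield decay comparable to $\lambda_j^{-1/n}$: this is precisely where the exclusion of the Hessian form $c(y-x)^{n-2}$ prevents the van der Corput order from collapsing, and where \eqref{AddCondi} is essential for closing the iteration by providing $\tilde T_j^*$ with the kernel regularity it needs. The specific form of $[p_-,p_+]$, with the Hölder duality $1/p_-+1/p_+=1$ reflecting the $T\leftrightarrow T^*$ symmetry afforded by \eqref{AddCondi}, emerges from matching the endpoint decay exponent against the geometric summation threshold $(1-\mu)/n$ dictated by the scaling factor $2^{-j(1-\mu)}$.
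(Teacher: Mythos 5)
Your overall scaffolding (split off the near-diagonal piece, dyadically decompose the remainder, rescale to normalize both the homogeneity of $S$ and the singular factor $|x-y|^{-\mu}$, then sum the rescaled bounds against the geometric weight $2^{-j(1-\mu)}$) is sound, and the scaling identity $\|T_j\|_{L^p\to L^p}=2^{-j(1-\mu)}\|\tilde T_j\|_{L^p\to L^p}$ together with the summation at the critical scale $2^{-j_\ast}=\lambda^{-1/n}$ is exactly the right mechanism for producing the exponent $-(1-\mu)/n$. However, there are two genuine problems with the way you propose to bound $\tilde T_j$, and the second one is serious.

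First, a structural point: after rescaling, $\tilde T_j$ is supported in the strip $\{|\tilde x-\tilde y|\approx 1,\ |\tilde x|,|\tilde y|\lesssim 2^j\}$, which is not uniformly compactly supported, and on that strip the Hessian $S''_{\tilde x\tilde y}(\tilde x,\tilde y)=c\prod_l(\tilde y-\alpha_l\tilde x)^{m_l}\prod_l Q_l(\tilde x,\tilde y)$ both degenerates (where lines $\tilde y=\alpha_l\tilde x$ with $\alpha_l\neq 1$ cross the strip near a bounded distance from the origin) and grows without bound as $|\tilde x|\to 2^j$. So Liu's or Phong--Stein's $L^2$ estimate cannot simply be ``applied to the rescaled phase''; you need a further dyadic decomposition of the strip in $|\tilde x|$ and an orthogonality argument to reassemble the pieces. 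This is repairable, but note that once you do it you have essentially reproduced the paper's double dyadic decomposition $T_{j,k}$ in $|x|$ and $|y|$.

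Second, and this is the real gap: the endpoint bound $\|\tilde T_j\|_{L^{p_\pm}\to L^{p_\pm}}\lesssim(1+\lambda_j)^{-\alpha(p_\pm)}$ with $n\alpha(p_\pm)>1-\mu$ is not something a $TT^\ast$ iteration delivers. Composing $\tilde T_j$ with $\tilde T_j^\ast$ and applying van der Corput to the $y$-integral controls $\|\tilde T_j\tilde T_j^\ast\|$, hence $\|\tilde T_j\|_{L^2\to L^2}$ and, with Schur, $L^p\to L^{p'}$-type restriction bounds; it does not produce $L^p\to L^p$ decay for $p\neq 2$. The only elementary $L^{p_-}$ bound available from this circle of ideas is Riesz--Thorin between the $L^2$ decay $\lambda_j^{-1/n}$ and the trivial $L^1$ bound, which gives $\alpha(p_-)=\tfrac{2(1-\mu)}{n(n-2\mu)}$; this satisfies $n\alpha(p_-)>1-\mu$ only when $n-2\mu<2$, i.e.\ it fails for every $n\geq 4$ (and for $n=3$ when $\mu\leq 1/2$), so your geometric sum does not close. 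What is actually required is the sharp $L^p$ decay for the (nonsingular) rescaled operator at the exponent $\tfrac{n}{n-1}$ (equivalently the full $H^1\to L^1$ endpoint after damping), which pushes $\alpha(p_-)$ up to the correct value $1/p_-'=\tfrac{1-\mu}{n-2\mu}$; only then does $n\alpha(p_-)-(1-\mu)=\tfrac{2\mu(1-\mu)}{n-2\mu}>0$ and the sum converge. That endpoint estimate is precisely the content one must prove, and it is where the paper (and \cite{xu2021sharp}, \cite{shi2017sharp}, \cite{yang2004sharp}) bring in the damping factor, the $L^2$ almost-orthogonality for damped pieces, the $H^1\to L^1$ bound, and Stein's complex interpolation. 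Your proposal treats this as a routine van der Corput consequence, which it is not; until you supply that ingredient the argument does not reach the stated range of $p$.
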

Observe that the phases in this result are just a class of homogeneous polynomials whose Hessian vanish only on lines away from axes. On the other extreme side, if the Hessian of the phase vanishes only on axes, this correponds to the monomial cases which have been extensively discussed in \cite{pan1998complete}, \cite{pan19972}. It should be pointed out that the operators considered in \cite{pan1998complete}, \cite{pan19972} are global integral operators which integrate over the entire real axis. To transfrom the boundedness results therein to the decay estimates for \eqref{OSO}, it suffices to apply the routine scaling argument which we omit here.  
Before we state our main results, some notations should be introduced. For a homogeneous polynomial of the form \eqref{phase}, we denote 
\begin{equation*}
	k_m=\min\{k: a_k\neq 0\},\quad k_M=\max\{k: a_k\neq 0\}.
\end{equation*}
In view of the argument above, if we aim to establish sharp $L^p$ decay estimates for operators with arbitrary homogeneous polynomial phases, it remains to discuss the case $k_m\neq k_M$. Then we obtain the following theorem.
\begin{theorem}\label{Main-Result}
	If $S(x,y)$ is a homogeneous polynomial of the form \eqref{phase} and $k_m\neq k_M$, while $K(x,y)$ safisfies \eqref{SinCond} and \eqref{AddCondi}. 
	Then the sharp decay estimate
	\begin{equation}
		\|Tf\|_{L^p}\leq C \lambda^{-\frac{1-\mu}{n}}\|f\|_{L^p}
	\end{equation}
	holds for $\frac{n-2\mu}{n-2\mu-k_m(1-\mu)}\leq p\leq \frac{n-2\mu}{(1-\mu)(n-k_M)}$, and $C$ is independent of $\lambda$.
\end{theorem}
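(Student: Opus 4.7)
The plan is to reduce Theorem \ref{Main-Result} to Theorem \ref{PreMain-Result} and to the sharp $L^p$ estimates for monomial-phase oscillatory integral operators, which follow from \cite{pan1998complete, pan19972} via the scaling argument indicated in the introduction. The guiding heuristic is that the sharp $L^p$ range for the monomial phase $x^{n-k}y^k$ (with a $|x-y|^{-\mu}$-type kernel) is $\frac{n-2\mu}{n-2\mu-k(1-\mu)} \le p \le \frac{n-2\mu}{(1-\mu)(n-k)}$, and that the range asserted in Theorem \ref{Main-Result} is obtained by combining the lower endpoint for $k = k_m$ with the upper endpoint for $k = k_M$ -- precisely the $p$-values at which the two ``axial regions'' of \eqref{phase}, where respectively the $l = k_m$ or $l = k_M$ monomial of $S$ dominates, become critical.

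By the involution $x \leftrightarrow y$ (which swaps $(k_m, k_M)$ with $(n-k_M, n-k_m)$) combined with $L^p$--$L^{p'}$ duality, the two endpoints of the claimed range are interchanged. Hence it suffices to establish one of them, say the upper endpoint $p_1 = \frac{n-2\mu}{(1-\mu)(n-k_M)}$; the other then follows, and interpolation with the sharp $L^2$ estimate of \cite{liu1999model} fills in the full range of $p$. To bound $T$ at $p = p_1$, I would decompose $\psi = \sum_{j \le 0} \psi_j$ with $\psi_j$ supported in $\{|x| \sim 2^j\}$, let $T_j$ be the corresponding piece, and rescale $x = 2^j u$. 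The phase transforms as
\[
    S(2^j u, y) = \sum_{l=k_m}^{k_M} a_l\, 2^{j(n-l)}\, u^{n-l}\, y^l,
\]
in which the $l = k_M$ term is dominant, the others being smaller by factors $2^{j(k_M-l)} \le 1$ for $j \le 0$. Replacing the phase by its dominant monomial, controlling the subdominant perturbation via an integration-by-parts argument in $y$ that uses the kernel bounds \eqref{SinCond} and \eqref{AddCondi}, and then invoking the sharp monomial $L^p$ estimate for the phase $u^{n-k_M} y^{k_M}$ with effective oscillatory parameter $\lambda\, 2^{j(n-k_M)}$ would yield a per-piece bound. Undoing the rescaling and combining the Jacobian with the monomial oscillatory decay $(\lambda\, 2^{j(n-k_M)})^{-(1-\mu)/n}$ produces a geometric factor $\lambda^{-(1-\mu)/n}\cdot 2^{j\gamma(p)}$, whose summability over $j \le 0$ amounts to $\gamma(p) > 0$; a direct computation of the exponent shows this is equivalent to $p \le p_1$.

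The main obstacle will be executing the perturbation step sharply: the replacement of the full phase by its dominant monomial on each piece must not cost an extra power of $2^j$, or else the upper endpoint degrades. This calls for an integration-by-parts argument that simultaneously exploits the smallness factor $2^{j(k_M - l)} \le 1$ of the subdominant terms and the second-derivative kernel bounds of \eqref{SinCond}, together with \eqref{AddCondi}. A secondary technical point is the kernel singularity $|x-y|^{-\mu}$, which after the rescaling concentrates along $y = 2^j u$ and may require a further sub-decomposition in $|y|$, effectively realizing a two-parameter dyadic decomposition. Finally, when Theorem \ref{PreMain-Result} fails to apply to the diagonal region because the Hessian has the exceptional form $c(y-x)^{n-2}$, the change of variables $y \mapsto y - x$ converts the phase into a monomial-type form and reduces that case to the same monomial framework.
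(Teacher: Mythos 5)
Your reduction by duality is sound, and the heuristic identification of the two endpoints with the axial monomials $x^{n-k_m}y^{k_m}$ and $x^{n-k_M}y^{k_M}$ is the right intuition. But the central technical step---rescaling only $x=2^ju$, declaring the $l=k_M$ term ``dominant,'' and treating the remaining terms as a perturbation controlled by integration by parts---has a real gap. Which term of $\sum_l a_l\,2^{j(n-l)}u^{n-l}y^l$ dominates the phase and its $(u,y)$-Hessian depends on the size of $y$ relative to $2^j$, not on $j$ alone: for $|y|\ll 2^j$ the $l=k_m$ piece controls, while for $|y|\sim 1$ the $l=k_M$ piece does. Worse, after rescaling the true Hessian still vanishes along the lines $y=\alpha_l\,2^ju$ coming from the factorization \eqref{Hessian}; these lines sit inside the rescaled support at $y\sim 2^j$ (away from the diagonal singularity) and are invisible to the Hessian $\sim u^{n-k_M-1}y^{k_M-1}$ of your candidate monomial. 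Any van der Corput or integration-by-parts estimate must see this extra degeneracy, and treating it as a small perturbation costs exactly the powers of $2^j$ you cannot afford at the endpoint. This is why the paper decomposes $T_2$ dyadically in \emph{both} variables into $T_X,T_\Delta,T_Y$ according to the relative sizes of $|x|$ and $|y|$, handles $T_\Delta$ by a Cotlar--Stein argument localized near the lines $y=\alpha_lx$, and handles $T_X,T_Y$ by inserting them into an analytic family of \emph{damped} operators $|D(x,y)|^z$ built from the actual Hessian factorization, then interpolating (Stein interpolation and the power-weight Lemma \ref{InterpolationLem}). The damping factor is precisely what encodes the degeneracy your perturbation step discards.

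The treatment of the exceptional Hessian $S''_{xy}=c(y-x)^{n-2}$ is also not correct as proposed. Up to irrelevant pure-$x$ and pure-$y$ terms one has $S(x,y)=\tilde c\,(y-x)^n$, and after $w=y-x$ the phase $\tilde c\,w^n$ depends on $w$ alone, so the mixed Hessian of the transformed phase is \emph{identically zero}: the operator becomes a convolution-type object that does not fall under the monomial framework of \cite{pan1998complete,pan19972} at all, and the operator van der Corput lemma is not directly applicable. This is exactly the case excluded from Theorem \ref{PreMain-Result}, and the paper has to prove new $L^2\to L^2$ and $H^1\to L^1$ bounds for a damped family $D_\lambda^z$ and run Stein interpolation (Section 4). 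So neither this exceptional case nor the main decomposition is a reduction to known results: the proof requires the damped-operator machinery and is genuinely different from the scaling-plus-monomial strategy you outline.
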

When dealing with degenerate oscillatory integral operators, we should analyze the Hessian $S^{''}_{xy}(x,y)$ of the phase function since we shall make use of the operator van der Corput lemma. By direct factor decomposition, we can rewrite the Hessian as 
\begin{equation}\label{Hessian}
	S^{''}_{xy}(x,y)=cx^\gamma y^\beta\prod_{l=1}^s(y-\alpha_lx)^{m_l}\prod_{l=1}^rQ_l(x,y),
\end{equation}
where $c$ and $\alpha_j$ are nonzero and each $Q_l(x,y)$ is a positive definite quadratic form, obviously,
\begin{align}
	&\gamma+\beta+\sum_{l=1}^sm_l+2r=n-2,\label{Degree}\\
	&k_m=\beta+1,\\
	&k_M=n-(\gamma+1).
\end{align}
Using these notations, we can reformulate Theorem \ref{Main-Result} as the following theorem which we will prove.
\begin{theorem}\label{Main-Result-1}
	If the Hessian of the phase function $S(x,y)$ is of the form \eqref{Hessian} and $s\neq 0$ or $r\neq 0$, while $K(x,y)$ safisfies \eqref{SinCond} and \eqref{AddCondi}. 
Then the sharp decay estimate
\begin{equation}\label{Main-Inq}
	\|Tf\|_{L^p}\leq C \lambda^{-\frac{1-\mu}{n}}\|f\|_{L^p}
\end{equation}
holds for $\frac{n-2\mu}{n-2\mu-(\beta+1)(1-\mu)}\leq p\leq \frac{n-2\mu}{(1-\mu)(\gamma+1)}$, and $C$ is independent of $\lambda$.
\end{theorem}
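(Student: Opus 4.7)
The plan is to adapt the $TT^*$/operator van der Corput scheme used for Theorem~\ref{PreMain-Result} in \cite{xu2021sharp} by introducing a further dyadic decomposition that accommodates the axis-degeneracy of the Hessian recorded by the factor $x^\gamma y^\beta$ in \eqref{Hessian}. Observing that the symmetry $x\leftrightarrow y$ interchanges $\gamma$ and $\beta$, a duality argument reduces matters to proving the upper endpoint $p_0 = \frac{n-2\mu}{(1-\mu)(\gamma+1)}$; interpolation with the sharp $L^2$ bound of \cite{liu1999model} then yields every $p\in[2,p_0]$, and the dual range $[p_0',2]$ is obtained by applying the same result to the adjoint operator, for which the parameters $(\gamma,\beta)$ are swapped.

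To attack the upper endpoint, I would fix a smooth partition of unity $\psi = \sum_{j,k\geq 0}\psi_{j,k}$ with $\psi_{j,k}$ supported in $\{|x|\sim 2^{-j},\,|y|\sim 2^{-k}\}$, and split the corresponding pieces $T_{j,k}$ into near-diagonal contributions ($|j-k|$ bounded) and off-diagonal contributions. For a near-diagonal piece the isotropic rescaling $(x,y) = (2^{-j}u,\,2^{-j}v)$ exploits homogeneity of $S$: the phase becomes $\tilde\lambda\,S(u,v)$ with $\tilde\lambda = \lambda 2^{-jn}$; the singular-kernel estimate \eqref{SinCond} supplies a factor $2^{j\mu}$; the measure contributes $2^{-j}$; and on the unit-scale support $|u|,|v|\sim 1$ the Hessian no longer sees the axes, so Theorem~\ref{PreMain-Result} applies directly after an internal localisation near each non-diagonal line $y=\alpha_l u$ carried out exactly as in \cite{xu2021sharp}. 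Tracking the $L^p$ change of variables converts this unit-scale bound back to $\|T_{j,j}f\|_{L^p}\lesssim \lambda^{-(1-\mu)/n}\|f\|_{L^p}$ whenever $\tilde\lambda\gtrsim 1$, with the trivial bound used beyond the critical scale $j_\ast\sim n^{-1}\log\lambda$ and any residual logarithmic factor absorbed by a standard $\varepsilon$-loss-and-reabsorption. For an off-diagonal piece, say $k\gg j$, the anisotropic rescaling $(x,y)=(2^{-j}u,\,2^{-k}v)$ reveals that the phase is overwhelmingly dominated by the extremal monomial $a_{k_m}x^{n-k_m}y^{k_m}$ (respectively $a_{k_M}x^{n-k_M}y^{k_M}$ in the symmetric case $j\gg k$); the sharp $L^p$ monomial bounds of \cite{pan1998complete,pan19972} supply the required decay, and their admissible range for these particular monomials is precisely $[p_0',p_0]$, so the resulting geometric gain in $\max(j,k)$ stays nonnegative up to $p=p_0$ and strictly positive throughout the open interval, which is exactly what is needed to sum in $(j,k)$.

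The main obstacle is the near-diagonal configuration when $\alpha_l=1$ for some $l$, i.e.\ when the Hessian vanishes on the very diagonal $y=x$ where $K$ is singular: this is precisely the case that was excluded from Theorem~\ref{PreMain-Result}, and it cannot be reduced to it by rescaling alone. I anticipate treating it through a further scale decomposition $|y-x|\sim 2^{-\ell}$ inside each near-diagonal dyadic piece, balancing the singular factor $|x-y|^{-\mu}$ in $K$ against the cancellation furnished by the vanishing $(y-x)^{m_l}$ in the Hessian via the operator van der Corput lemma. The delicate point is to verify that, after integrating in $\ell$, the resulting gain matches $\lambda^{-(1-\mu)/n}$ exactly at the endpoint $p=p_0$ rather than only strictly inside the interval; this is what pins down the range stated in the theorem.
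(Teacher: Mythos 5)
Your proposal diverges fundamentally from the paper, which does \emph{not} argue by rescaling to unit scale and invoking Theorem~\ref{PreMain-Result} or monomial bounds; instead it embeds $T_X$ and $T_Y$ into analytic families of damped operators $T_X^z,T_Y^z$ with damping $|D(x,y)|^z$, proves $L^2$ decay by Cotlar--Stein for $\Re(z)>0$, proves $L^1$ (resp.\ weak $L^1$) bounds for a critical negative $\Re(z)$, and then uses Stein complex interpolation together with a weighted-norm lifting lemma (Lemma~\ref{InterpolationLem}) to reach the endpoints exactly. The case where the Hessian is a pure power $c(y-x)^{n-2}$ is isolated and handled separately through an $H^1\to L^1$ bound for the damped family, again followed by complex interpolation. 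Your route avoids all of this machinery, so it is genuinely different --- but it has two concrete gaps that prevent it from reaching the claimed range.

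First, the off-diagonal reduction to a single monomial cannot yield the stated endpoints. If $k\gg j$, the dominant monomial is $x^{n-k_m}y^{k_m}$ whose Hessian is $cx^{n-k_m-1}y^{k_m-1}$; the admissible $L^p$ range for this monomial is $\left[\tfrac{n-2\mu}{\,n-2\mu-k_m(1-\mu)\,},\,\tfrac{n-2\mu}{(1-\mu)(n-k_m)}\right]$, and since $k_m<k_M$ its right endpoint $\tfrac{n-2\mu}{(1-\mu)(n-k_m)}$ is \emph{strictly smaller} than $p_0=\tfrac{n-2\mu}{(1-\mu)(n-k_M)}$. Symmetrically, the region $j\gg k$ is governed by $x^{n-k_M}y^{k_M}$, whose range has left endpoint $\tfrac{n-2\mu}{\,n-2\mu-k_M(1-\mu)\,}$, strictly larger than the stated one. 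The intersection of the two monomial ranges is therefore a proper subinterval of $\bigl[\tfrac{n-2\mu}{n-2\mu-(\beta+1)(1-\mu)},\,\tfrac{n-2\mu}{(1-\mu)(\gamma+1)}\bigr]$, so the claim that ``their admissible range for these particular monomials is precisely $[p_0',p_0]$'' is false and the resulting geometric gain is \emph{negative}, not nonnegative, near the true endpoint. This is exactly why the paper damps by $|D(x,y)|^z$ rather than freezing a monomial: the damping is what makes the dyadic sum converge at $p_0$ in $L^2$, and the endpoint $L^p$ bound is then extracted by complex interpolation against an $L^1$ (or $H^1\to L^1$) estimate, not by summing a borderline geometric series. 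Second, the ``$\varepsilon$-loss-and-reabsorption'' you invoke for the near-diagonal pieces has no mechanism to reabsorb anything here: summing $\sim\log\lambda$ pieces each of size $\lambda^{-(1-\mu)/n}$ gives $\lambda^{-(1-\mu)/n}\log\lambda$, and replacing $\log\lambda$ by $\lambda^\varepsilon$ makes things worse, not better. What removes the logarithm in the paper is almost-orthogonality of the dyadic pieces (Cotlar--Stein and the local $T_\Delta$ orthogonality inherited from \cite{xu2021sharp}), a step your outline does not supply. A final, smaller point: the hard case is not ``some $\alpha_l=1$'' --- the cut-off $1-\phi(\lambda^{1/n}(x-y))$ already excises a $\lambda^{-1/n}$-neighbourhood of the diagonal, so a diagonal line among several singular varieties causes no extra trouble. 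The case needing a separate argument is precisely when the Hessian is \emph{entirely} $c(y-x)^{n-2}$, for which the paper establishes an $H^1\to L^1$ bound for the damped family with $\Re(z)=\mu-1$ and interpolates.
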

Now we roughly describe the strategy of our proof. \\

\emph{Step} 1: Seperating the operator $T$ into two parts by inserting a cut-off function 
\begin{equation*}
	T=T_1+T_2.
\end{equation*} 
The support of $T_1$ is in a neighborhood of the singular line, $x-y=0$, of $K(x,y)$. Simple Schur test yields \eqref{Main-Inq} for $T_1$. In this step, the decay comes from the integrability of $K(x,y)$, so we do not concern about what the phase function is.\\

\emph{Step }2: We move to treat $T_2$ and analyze the singular varieties of the Hessian. In view of the form of \eqref{Hessian}, it may vanish on $x-$axis, $y-$axis or a line crossing the origin. Specifically, we consider the following three cases respectively.\\

\emph{Case }1: $\gamma\neq 0$ or $\beta\neq 0$.\\
This means the singular varieties of the Hessian consist of at least one axis. Since  $x$ and $y$ are in the same status in this operator, so it suffices to consider for instance $\gamma\neq 0$, and the case $\beta\neq 0$ can be dealt with by duality and interchanging the roles of $x$ and $y$. Furthermore, we divide $T_2$ into three parts according to the singular varieties,
\begin{equation*}
	T_2=T_X+T_\Delta+T_Y.
\end{equation*} 
\eqref{Main-Inq} for $T_X$ and $T_Y$ can be derived by inserting them into two families of analytic operators and employing lifting trick and complex interpolation respectively. For $T_\Delta$, we shall apply local Riesz-Thorin interpolation to get the conclusion.\\

\emph{Case }2: The Hessian is of the form $c(y-x)^{n-2}$.\\
In this case, local Riesz-Thorin interpolation can not give the endpoint estimates. So we insert this operator into a family of damped oscillatory integral operators and establish $L^2\to L^2$ as well as $H^1\to L^1$ boundedness results for different complex exponents. At last, complex interpolation implies the final result.\\

\emph{Case }3: Otherwise.\\

This case has been treated in \cite{xu2021sharp}, specifically Theorem \ref{PreMain-Result}.\\

The novelty of this paper is the observation that, apart from some special cases, orthogonality is powerful enough to give the sharp $L^p$ decay estimates for one-dimensional degenerate oscillatory integral operators. The orthogonality displays in two aspects: on one side, if the varieties of the Hessian contain lines apart from axes, around one such line, the geometry gurantees the orthogonality even on $L^p$ spaces; on the other side, when we deal with the region close to axes, we use the othogonality of damped oscillatory integral operators. In fact, following this strategy, we can recover the main results of \cite{shi2017sharp}. Moreover, together with the works of \cite{pan1998complete} and \cite{pan19972}, we step forward to understand the $L^p$ mapping properties of \eqref{OSO} with more degenerate phases.
\\

This article is organized as follows: Section 2 is devoted to \emph{Step }1. In Section 3, we deal with \emph{Case} 1. \emph{Case} 2 will be treated in Section 4. Since the basic strategy in the present paper is similar with \cite{xu2021sharp}, so we omit some repeating and routine arguments and just state those different techniques as well as some necessary contents.\\

{\bf Notation:} In this paper, the constant $C$ independent of $\lambda$ and the test function $f$ is not necessarily the same one in each occurrence. Readers interested in the precise parameters, the constant $C$ depend on, may refer to \cite{xu2021sharp}. Throughout this paper, we use $\norm{\cdot}$ to denote $\norm{\cdot}_{L^2\to L^2}$ and $\norm{\cdot}_{L^p}$ to denote $\norm{\cdot}_{L^p\to L^p}$.

\section{Preliminaries}
Two elements arise in \eqref{OSO}, one is the singular integral, the other one is the oscillatory integral. We need to clarify different contributions of these two kind of integrals to the final decay. Specifically, inserting a cut-off function into \eqref{OSO} we get
\begin{align*}
Tf(x)&=\int_{\R} e^{i\lambda S(x,y)}K(x,y)\psi(x,y)f(y)dy\\
&=\int_{\R} e^{i\lambda S(x,y)}K(x,y)\phi\left(\lambda^{\frac{1}{n}}(x-y)\right)\psi(x,y)f(y)dy+\\
&\quad\int_{\R} e^{i\lambda S(x,y)}K(x,y)\left[1-\phi\left(\lambda^{\frac{1}{n}}(x-y)\right)\right]\psi(x,y)f(y)dy\\
&:=T_1f(x)+T_2f(x),
\end{align*}
where $\phi\in C_0^\infty(\R)$ and
\begin{equation*}
\phi(x)\equiv
\begin{cases}
0, &\quad \abs{x}\geq 1,\\
1, &\quad \abs{x}\leq \frac{1}{2}.
\end{cases}
\end{equation*}
The kernel of $T_1$
\begin{equation*}
K_1(x,y)=e^{i\lambda S(x,y)}K(x,y)\phi\left(\lambda^{\frac{1}{n}}(x-y)\right)\psi(x,y)
\end{equation*}
is absolutely integrable and
\begin{equation*}
	\sup_x\int \abs{K_1(x,y)}dy\leq C\lambda^{-\frac{1-\mu}{n}}, \quad \sup_y\int \abs{K_1(x,y)}dx\leq C\lambda^{-\frac{1-\mu}{n}}.
\end{equation*}
Therefore the following Schur test leads to \eqref{Main-Inq} for $T_1$.
\begin{lemma}\label{Schur-Test}
If the operator
\begin{equation*}
Vf(x)=\int K(x,y)f(y)dy,
\end{equation*}
has a kernel $K(x,y)$ satisfying
\begin{equation*}
\sup_x\int \abs{K(x,y)}dy\leq A_1, \quad \sup_y\int \abs{K(x,y)}dx\leq A_2,
\end{equation*}
then
\begin{equation*}
\norm{V}_{L^p\rightarrow L^p}\leq \left(\frac{A_1}{p}+\frac{A_2}{p'}\right),
\end{equation*}
where $1\leq p\leq+\infty$.
\end{lemma}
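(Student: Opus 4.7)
The plan is to obtain the two endpoint bounds $\norm{V}_{L^\infty\to L^\infty}\leq A_1$ and $\norm{V}_{L^1\to L^1}\leq A_2$ by inspection, and then to recover the whole range $1\leq p\leq \infty$ either through Riesz--Thorin interpolation or, more directly, through a single H\"older decomposition of the kernel followed by Fubini--Tonelli. Since this is a classical result, no substantive obstacle is expected; the proof is essentially bookkeeping, and I would favour the direct route to keep things self-contained.

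The endpoints are immediate. For $p=\infty$, Minkowski inside the integral together with the row-sum hypothesis gives $\abs{Vf(x)}\leq \norm{f}_\infty\int\abs{K(x,y)}\,dy\leq A_1\norm{f}_\infty$. For $p=1$, Fubini--Tonelli swaps the order of integration and the column-sum hypothesis yields $\int\abs{Vf(x)}\,dx\leq\int\abs{f(y)}\int\abs{K(x,y)}\,dx\,dy\leq A_2\norm{f}_1$.

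For $1<p<\infty$, the key trick is to split $\abs{K(x,y)}=\abs{K(x,y)}^{1/p'}\cdot\abs{K(x,y)}^{1/p}$ inside the pointwise bound $\abs{Vf(x)}\leq\int\abs{K(x,y)}\abs{f(y)}\,dy$ and to apply H\"older's inequality with conjugate exponents $(p',p)$. This produces
\[
\abs{Vf(x)}\leq\left(\int\abs{K(x,y)}\,dy\right)^{1/p'}\left(\int\abs{K(x,y)}\abs{f(y)}^p\,dy\right)^{1/p}\leq A_1^{1/p'}\left(\int\abs{K(x,y)}\abs{f(y)}^p\,dy\right)^{1/p}.
\]
Raising to the $p$-th power, integrating in $x$, and exchanging the order of integration to invoke the column-sum bound $A_2$ gives $\norm{Vf}_p^p\leq A_1^{p/p'}A_2\norm{f}_p^p$, i.e.\ $\norm{V}_{L^p\to L^p}\leq A_1^{1/p'}A_2^{1/p}$. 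A final application of Young's inequality $a^{1/p'}b^{1/p}\leq a/p'+b/p$ converts the geometric-mean bound into the additive form stated in the lemma. The only point requiring mild care is ensuring the iterated integrals are well defined, but this is immediate from Tonelli since the integrands are non-negative; alternatively one can first restrict $f$ to a dense class and pass to the limit.
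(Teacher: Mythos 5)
Your argument is the standard Schur-test proof and it is correct: the two endpoint bounds are immediate, the H\"older splitting $\abs{K}=\abs{K}^{1/p'}\cdot\abs{K}^{1/p}$ gives the geometric-mean bound $\norm{V}_{L^p\to L^p}\le A_1^{1/p'}A_2^{1/p}$, and Young's inequality then yields an additive bound. The paper itself omits the proof entirely (deferring to \cite{xu2021sharp} and \cite{prama2005}), so there is no in-text argument to compare against; what you wrote is surely what those references do.

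There is, however, one point you glossed over and should not have. Young's inequality as you invoked it gives
\[
A_1^{1/p'}A_2^{1/p}\;\le\;\frac{A_1}{p'}+\frac{A_2}{p},
\]
which is \emph{not} ``the additive form stated in the lemma'': the lemma claims $\frac{A_1}{p}+\frac{A_2}{p'}$, with $A_1$ and $A_2$ (equivalently $p$ and $p'$) in the opposite positions. Your derivation in fact exposes a typo in the printed statement. You can see this directly from the endpoints: at $p=\infty$ your (and the standard) bound gives $A_1$, the row-sum constant, whereas the printed formula gives $A_2$; at $p=1$ the situation reverses. The printed formula is actually false in general --- for instance, take $K(x,y)=g(x)\mathbf{1}_{[0,1]}(y)$ with $g=N\mathbf{1}_{[0,1/N]}$, so that $A_1=N$, $A_2=1$, yet $\norm{V}_{L^\infty\to L^\infty}=N>1=\tfrac{A_1}{\infty}+\tfrac{A_2}{1}$. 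The typo is harmless for the paper's application, where $A_1=A_2$, but since you claimed your computation reproduces the lemma verbatim, you should have noticed and flagged the mismatch rather than asserting agreement.
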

The proof of this lemma is easy, interested readers may find more details in \cite{xu2021sharp}, so we omit it here. A more general version of this lemma has been given by \cite{prama2005}. \\ 

Choose a cut-off function $\Psi\in C_0^{\infty}$ such that $\supp\Psi\subset[\frac{1}{2},2]$ and $\sum_{l\in \mathbb{Z}}\Psi(2^lx)\equiv 1$. Dyadically decompose $T_2$  as
\begin{align*}
T_2f(x)&=\sum_{\sigma_1,\sigma_2=\pm}\sum_{j,k}\int_{\R} e^{i\lambda S(x,y)}K(x,y)\left[1-\phi\left(\lambda^{\frac{1}{n}}(x-y)\right)\right]\Psi_j(\sigma_1x)\Psi_k(\sigma_2y)\psi(x,y)f(y)dy\\
&:=\sum_{j,k}T_{j,k}^{\sigma_1,\sigma_2}f(x)
\end{align*}
where $\Psi_j(x)=\Psi(2^jx), \Psi_k(x)=\Psi(2^kx)$. For convenience, we focus only on the case $\sigma_1=+, \sigma_2=+$, the remaining cases can be dealt with similarly. We shall still use $T_2$ and $T_{j,k}$ to denote $\sum_{j,k}T_{j,k}^{+,+}$ and $T_{j,k}^{+,+}$ respectively. 
Since we only consider the first quadrant, we may suppose that $0<\alpha_1<\alpha_2<\cdots<\alpha_s$. Now we finish the preparation work and are clear about all the varieties of the Hessian. The next step is to properly decompose the operator according to these varieties.
\section{Case 1: $\gamma\neq 0$ or $\beta\neq 0$.}
As we have stated in Section 1, we may assume $\gamma\neq 0$. Suppose that $\mathcal{K}$ is a positive constant depending on $\alpha_1,\cdots,\alpha_s$. Let $j\gg k$($j\ll k$) represent $j>k+\mathcal{K}$($j<k-\mathcal{K}$) such that the size of $y$-variable ($x$-variable) is dominant in the Hessian $S_{xy}^{''}$, while $j\sim k$ naturally means $|j-k|\leq \mathcal{K}$.
We further divide $T_2$ into three groups as follows.
\begin{align*}
T_2f(x)&=\sum_{j\gg k}T_{j,k}f(x)+\sum_{j\sim k}T_{j,k}f(x)+\sum_{j\ll k}T_{j,k}f(x)\\
&:=T_Yf(x)+T_\Delta f(x)+T_Xf(x).
\end{align*}
Our goal is to establish \eqref{Main-Inq} for $T_X, T_\Delta$ and $T_Y$ individually.\\

We insert $T_X$ and $T_Y$ into the following two families of analytic damped oscillatory integral operators
\begin{align}
\notag T_Y^zf(x)&=\sum_{j\gg k}\int_{\R}e^{i\lambda S(x,y)}K(x,y)\abs{D(x,y)}^{z}\left[1-\phi\left(\lambda^{\frac{1}{n}}(x-y)\right)\right]\cdot\\
\notag&\quad\quad\quad \Psi_j(x)\Psi_k(y)\psi(x,y)f(y)dy,\\
&:=\sum_{j\gg k}D^Y_{j,k}f(x),\label{T_X}\\
\notag T_X^zf(x)&=\sum_{j\ll k}\int_{\R}e^{i\lambda S(x,y)}K(x,y)\abs{D(x,y)}^{z}\left[1-\phi\left(\lambda^{\frac{1}{n}}(x-y)\right)\right]\cdot\\
\notag &\quad\quad\quad\Psi_j(x)\Psi_k(y)\psi(x,y)f(y)dy\\
&:=\sum_{j\ll k}D^X_{j,k}f(x).\label{T_Y}
\end{align}
The damped oscillatory operators in one dimension have been studied intensively, if the damping factor is the Hessian of the phase function, readers can find more results in \cite{phong1998damped}, if the damping factor is unrelated to the phase function, readers may refer to \cite{pramanik2002convergence} and \cite{prama2005}. In \cite{yang2004sharp} the author used damping estimates to establish $L^p$ decay estimates for oscillatory integral operators. 
Our goal of this section is to establish the following $L^2$ decay estimates for damped oscillatory integral operators. 
\begin{theorem}\label{dampL2}
If the Hessian of the phase function $S(x,y)$ is of the form \eqref{Hessian} and $\gamma\neq 0$, and if we set
\begin{equation}\label{DamFac1}
	D(x,y)=cx^\gamma\prod_{l=1}^s(y-\alpha_lx)^{m_l}\prod_{l=1}^rQ_l(x,y),
\end{equation}
then for $\Re(z)=\frac{n-2\mu-2(1-\mu)(\beta+1)}{2(n-2-\beta)(\beta+1)}\geq 0$, we have
\begin{align}
&\norm{T_Y^zf}_{L^2}\leq C\lambda^{-\frac{n-2\mu}{2n}\cdot \frac{1}{\beta+1}}\norm{f}_{L^2},\label{dampL2-1}\\
&\norm{T_X^zf}_{L^2}\leq C\lambda^{-\frac{n-2\mu}{2n}\cdot \frac{1}{\beta+1}}\norm{f}_{L^2}.\label{dampL2-2}
\end{align}
If we set
\begin{equation}\label{DamFac2}
	D(x,y)=cx^\gamma,
\end{equation}
then for $\Re(z)=\frac{1}{2\gamma}\cdot\frac{2(1-\mu)(\gamma+1)-n+2\mu}{n-2\mu-(1-\mu)(\gamma+1)}\geq 0$, we have
\begin{align}
	&\norm{T_Y^zf}_{L^2}\leq C\lambda^{-\frac{n-2\mu}{2n}\cdot\frac{1-\mu}{n-2\mu-(1-\mu)(\gamma+1)}}\norm{f}_{L^2},\label{dampL2-3}\\
	&\norm{T_X^zf}_{L^2}\leq C\lambda^{-\frac{n-2\mu}{2n}\cdot\frac{1-\mu}{n-2\mu-(1-\mu)(\gamma+1)}}\norm{f}_{L^2}.\label{dampL2-4}
\end{align}
\end{theorem}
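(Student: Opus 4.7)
The plan is to dyadically decompose each operator as already set up in the excerpt and apply the one-dimensional operator van der Corput lemma on each rectangular piece, then sum back using that the pieces have essentially disjoint supports. The role of the damping $|D(x,y)|^z$ is to convert a divergent sum into a convergent one: the prescribed value of $\Re(z)$ is exactly the balance point at which the powers of $2^j$ (or $2^k$) produced by $(\lambda|S^{''}_{xy}|)^{-1/2}$ times the kernel size and the damping magnitude cancel in one summation direction, leaving a controllable geometric series in the other.

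On the generic piece $D^Y_{j,k}$ with $j \gg k$ (so $y \sim 2^{-k} \gg x \sim 2^{-j}$), I would first record the size estimates. Since $y - \alpha_l x \sim y$ and $Q_l(x,y) \sim y^2$ in this regime, the degree identity $\beta + \sum m_l + 2r = n-2-\gamma$ from \eqref{Degree} gives $|S^{''}_{xy}| \sim 2^{-j\gamma-k(n-2-\gamma)}$; the singular kernel obeys $|K(x,y)| \lesssim 2^{k\mu}$ since $|x-y| \sim 2^{-k}$; and $|D(x,y)|^{\Re z}$ is of order $2^{-\Re(z)[j\gamma+k(n-2-\gamma-\beta)]}$ under \eqref{DamFac1} and of order $2^{-j\gamma\Re(z)}$ under \eqref{DamFac2}. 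A parallel analysis with $x$ and $y$ swapped handles $D^X_{j,k}$ when $j \ll k$.

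Next, applying operator van der Corput to each piece produces
\[
\|D^Y_{j,k}\|_{L^2 \to L^2} \lesssim \lambda^{-1/2}\bigl(2^{-j\gamma-k(n-2-\gamma)}\bigr)^{-1/2} \cdot 2^{k\mu} \cdot \sup_{\mathrm{supp}}|D(x,y)|^{\Re z},
\]
up to acceptable amplitude-derivative corrections. Substituting the prescribed $\Re(z)$ collapses the right-hand side into a product of pure geometric factors in $j$ and $k$. The cutoff $1-\phi(\lambda^{1/n}(x-y))$ restricts $2^{-k}$ from below by $\lambda^{-1/n}$, and the resulting geometric sum is dominated by its endpoint $2^{-k} \sim \lambda^{-1/n}$, producing the asserted rates $\lambda^{-(n-2\mu)/[2n(\beta+1)]}$ and $\lambda^{-(n-2\mu)(1-\mu)/[2n(n-2\mu-(1-\mu)(\gamma+1))]}$, respectively. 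The $L^2$ gluing of the pieces is essentially free because, for fixed $j$, only $O(1)$ dyadic shells in $y$ contribute nontrivially at any given point (and vice versa), so a Schur/Cotlar-Stein argument applies.

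The main obstacle I anticipate is the algebraic bookkeeping to verify that a single $\Re(z)$ produces the same decay rate for both $T_Y^z$ (summed over $j \gg k$) and $T_X^z$ (summed over $j \ll k$), and that this $\Re(z)$ is non-negative in the range of $\mu$ matching the $L^p$ endpoints of Theorem \ref{Main-Result-1}. The seemingly awkward exponents in the statement are precisely those for which the powers of $2^j$ (respectively $2^k$) cancel in the relevant summation, leaving a borderline-logarithmic sum in the other variable that is absorbed into the final $\lambda^{-\alpha}$ rate via the $\phi$ cutoff; this matching calculation, rather than van der Corput itself, is where most of the work lies.
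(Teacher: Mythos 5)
Your high-level setup (dyadic decomposition, local operator van der Corput on each box, then reassembly) matches the paper's, but the crucial step is described incorrectly and the reassembly step has a genuine gap.

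First, the role you assign to $\Re(z)$ is not what actually makes the argument close. You assert that the prescribed $\Re(z)$ is "exactly the balance point at which the powers of $2^j$ (or $2^k$) \ldots cancel in one summation direction," and that the remaining geometric sum "is dominated by its endpoint." That is not the case for single-piece estimates. In the regime $j\gg k$ (i.e.\ $T_Y$) with the damping \eqref{DamFac1}, the van der Corput bound
\[
\lambda^{-1/2}\bigl(2^{-j\gamma-k(n-2-\gamma)}\bigr)^{-1/2}\,2^{k\mu}\,2^{-\Re(z)[j\gamma+k(n-2-\gamma-\beta)]}
\]
has $j$-exponent $\gamma\bigl(\tfrac12-\Re(z)\bigr)$, and a direct computation shows $\Re(z)\le\tfrac12$ (equality iff $\beta=0$), so this exponent is \emph{nonnegative}: the $j$-sum does not converge from the oscillatory bound alone. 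One must supplement it with the trivial size estimate (kernel sup times measure of support) and take the minimum. Even then, when $\beta=0$ (which is allowed here) the oscillatory bound has \emph{zero} $j$-exponent, the crossover region contributes $\sim\log\lambda$ terms of equal size, and the sum-of-norms produces a genuine $\log\lambda$ loss that cannot be "absorbed into the final $\lambda^{-\alpha}$ rate via the $\phi$ cutoff." The same defect occurs for $T_X^z$ when $\beta=0$.

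Second, the "essentially free" gluing claim is where the real work hides, and it is not free. The operators $D_{j,k}^Y$ for distinct $(j,k)$ do have almost-disjoint supports, but that only forces $D_{j,k}^Y(D_{j',k'}^Y)^*=0$ unless $|k-k'|\lesssim1$ and $(D_{j,k}^Y)^*D_{j',k'}^Y=0$ unless $|j-j'|\lesssim1$; it does \emph{not} by itself give the geometric off-diagonal decay that Cotlar--Stein requires in the remaining free index. What the paper actually does is estimate the \emph{compositions} $D_{j,k}(D_{j',k'})^*$ via Lemma \ref{Orth_2}, write down both an oscillatory bound and a trivial size bound for this composed operator, and take a convex combination with a parameter $\theta=\tfrac1{\beta+1}$ (respectively $\theta=\tfrac{1-\mu}{n-2\mu-(1-\mu)(\gamma+1)}$ for \eqref{DamFac2}) chosen so that the residual $2^k$ (resp.\ $2^j$) power cancels; the prescribed $\Re(z)$ is then dictated by the needs of the $L^1$ endpoint estimate in Theorem~\ref{EndPo1} that these damped $L^2$ bounds must interpolate against. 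Only after this convex combination does one get a bound depending solely on $|j-j'|$ (or $|k-k'|$) with geometric decay, and Cotlar--Stein applies. Finally, when $\beta=0$ even this fails for $T_X^z$, because the combined bound has no decay in $|k-k'|$; the paper handles that case separately by regrouping $D_j^X=\sum_{k\gg j}D_{j,k}^X$ and applying Lemma~\ref{Orth_2} to the enlarged boxes. None of these three ingredients — the $TT^*$-level estimates, the convex-combination choice of $\theta$, and the $\beta=0$ regrouping — appear in your outline, and without them the claimed sharp $L^2$ decay (with no logarithmic loss) is not obtained.
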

Note that this theorem extends Theorem 2 in \cite{xu2021sharp} to more general phases. To establish these estimates, we start with a local version, then we make use of them to get the global estimates. In fact, the procedure to get the local version of these estimates is routine and cumbersome because it needs some notations and definitons. The following part is basically same with the conterpart in \cite{xu2021sharp}, however, it is necessary for the rigorous statement, so we keep them here. Readers may skip these and directly see Lemma \ref{Orth_2}. \\

Let us introduce the local damped operators
 \begin{equation}\label{DampO}
	D(\mathcal{B})f(x)=\int_{\R} e^{i\lambda S(x,y)}\abs{D(x,y)}^{z}K(x,y)\left[1-\phi\left(\lambda^{\frac{1}{n}}(x-y)\right)\right]\psi(x,y)f(y)dy,
\end{equation}
where $\psi\in C_0^{\infty}$ and $\supp\psi\subset \mathcal{B}$. Now we consider two operators $D(\mathcal{B}_1)$ and $D(\mathcal{B}_2)$ with supports in $\mathcal{B}_1$ and $\mathcal{B}_2$ respectively. Here both $\mathcal{B}_1$ and $\mathcal{B}_2$ are rectangular boxes with sides parallel to the axes; in addition, we suppose that $\mathcal{B}_2$ is the minor box and will be contained in a horizontal translate of the major box $\mathcal{B}_1$. Now, we repeat the statements and assumptions in \cite{phong1998damped}.
\begin{align*}
&\mathcal{B}_1=\{(x,y): a_1<x<b_1, c_1<y<d_1\}, \rho_1=d_1-c_1;\\
&\widetilde{\mathcal{B}_1}=\left\{(x,y): a_1-\frac{1}{10}(b_1-a_1)<x<b_1+\frac{1}{10}(b_1-a_1), c_1<y<d_1\right\};\\
&\mathcal{B}_1^*=\{(x,y): a_1-(b_1-a_1)<x<b_1+(b_1-a_1), c_1<y<d_1\};\\
&\mathcal{B}_2=\{(x,y): a_2<x<b_2, c_2<y<d_2\}, \rho_2=d_2-c_2.
\end{align*}
\begin{enumerate}
\item[(A1)] We define the span \emph{span}($\mathcal{B}_1,\mathcal{B}_2$), as the union of all line segments parallel to the $x$-axis, which joints a point $(x,y)\in\mathcal{B}_1$ with a point $(z,y)\in\mathcal{B}_2$. While we also assume that $S_{xy}^{''}$ does not change sign in the span \emph{span}($\mathcal{B}_1,\mathcal{B}_2$) and satisfies
    \begin{align}
    &\nu\leq \min_{\widetilde{\mathcal{B}_1}}\abs{S_{xy}^{''}}\leq A\nu,\\
    &\max_{\emph{span}(\mathcal{B}_1,\mathcal{B}_2)}\abs{S_{xy}^{''}}\leq A\nu.
    \end{align}
\item[(A2)] $\mathcal{B}_2\subset \mathcal{B}_1^*$, this implies $\rho_2\leq \rho_1$.
\end{enumerate}
 For the cut-off functions $\psi_j(x,y)$, we also assume that
\begin{enumerate}
\item[(A3)] $\sum_k\rho_j^{k}\abs{\partial_y^{k}\psi_j}\leq B$.
\end{enumerate}
Based on these concepts, repeating the proof of Lemma 2 in \cite{xu2021sharp}, we can obtain that
\begin{lemma}\label{Orth_2}
Under the assumptions (A1)-(A3), 
\begin{align}
&\norm{D(\mathcal{B}_1)D(\mathcal{B}_2)^{*}}\leq C \lambda^{\frac{2\mu}{n}}(\lambda \nu)^{-1}
\sup_{\mathcal{B}_2}\abs{D(x,y)}^{\Re(z)}\cdot\sup_{\widetilde{\mathcal{B}_1}}\abs{D(x,y)}^{\Re(z)},\label{Orth_2-1}\\
&\norm{D(\mathcal{B}_2)D(\mathcal{B}_1)^{*}}\leq C \lambda^{\frac{2\mu}{n}}(\lambda \nu)^{-1}
\sup_{\mathcal{B}_2}\abs{D(x,y)}^{\Re(z)}\cdot\sup_{\widetilde{\mathcal{B}_1}}\abs{D(x,y)}^{\Re(z)}.\label{Orth_2-2}
\end{align}
\end{lemma}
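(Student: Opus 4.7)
The strategy is the Phong-Stein $TT^{*}$ orthogonality argument adapted to singular kernels, and the lemma is the direct analogue of \cite[Lemma 2]{xu2021sharp}. First, I write the composition kernel
\begin{equation*}
\mathcal{K}(x,w)=\int_{\R}e^{i\lambda[S(x,y)-S(w,y)]}K(x,y)\overline{K(w,y)}|D(x,y)|^{z}|D(w,y)|^{\bar z}\,\Xi(x,w,y)\,dy,
\end{equation*}
where $\Xi$ gathers the cut-offs $[1-\phi(\lambda^{1/n}(x-y))][1-\phi(\lambda^{1/n}(w-y))]\psi(x,y)\overline{\psi(w,y)}$ with $\psi$ supported in $\mathcal{B}_1,\mathcal{B}_2$ respectively (and analogously for $D(\mathcal{B}_2)D(\mathcal{B}_1)^{*}$). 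By Lemma \ref{Schur-Test}, it suffices to bound $\sup_x\int|\mathcal{K}(x,w)|\,dw$ and $\sup_w\int|\mathcal{K}(x,w)|\,dx$ by the right-hand side of \eqref{Orth_2-1}.

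To produce the factor $(\lambda\nu)^{-1}$, I set $\Phi(y)=\lambda[S(x,y)-S(w,y)]$ and invoke the mean value theorem to write $\Phi'(y)=\lambda(x-w)S''_{xy}(\xi,y)$ for some $\xi$ between $x$ and $w$; since $(\xi,y)\in\mathrm{span}(\mathcal{B}_1,\mathcal{B}_2)$, hypothesis (A1) yields $|\Phi'(y)|\geq\lambda\nu|x-w|$ and a constant sign for $S''_{xy}$. Integrating by parts in $y$ (twice if necessary to absorb the natural logarithmic loss) then produces the desired decay, with the amplitude derivatives controlled by the bounds $|K|\leq C\lambda^{\mu/n}$ and $|\partial_y K|\leq C\lambda^{(\mu+1)/n}$ from \eqref{SinCond} valid on the support of $1-\phi(\lambda^{1/n}(\cdot-y))$, by the identity $\partial_y|D|^{z}=z|D|^{z}\,\partial_y D/D$ together with a pointwise comparison $|\partial_y D/D|\lesssim\rho_2^{-1}$ on the dyadic pieces, and by hypothesis (A3) for $\partial_y\psi$. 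Combined with the trivial size bound $|\mathcal{K}(x,w)|\leq C\lambda^{2\mu/n}\rho_2\sup|D|^{2\Re z}$ (from the $y$-support of length $\rho_2$), Schur's test delivers \eqref{Orth_2-1}; the estimate \eqref{Orth_2-2} follows by the same computation with the roles of $\mathcal{B}_1,\mathcal{B}_2$ interchanged.

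The principal obstacle is the bookkeeping for $\partial_y|D(x,y)|^{z}$, which requires $\Re z\geq 0$ (whence the hypothesis in Theorem \ref{dampL2}) together with the pointwise comparison $|\partial_y D/D|\lesssim\rho_2^{-1}$. The latter holds because on each dyadic piece in \eqref{T_X}, \eqref{T_Y} with $j\gg k$ or $j\ll k$, every factor $(y-\alpha_l x)^{m_l}$, $Q_l(x,y)$, and $x^{\gamma}$ in \eqref{DamFac1}--\eqref{DamFac2} is comparable to a pure monomial in the dominant variable, so that $|D|$ and $\rho_2|\partial_y D|$ have the same order of magnitude. Once this case analysis is in place, the rest of the estimate is a verbatim repetition of \cite[Lemma 2]{xu2021sharp} and I do not reproduce the routine computation.
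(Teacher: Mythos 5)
Your outline --- the Phong--Stein $TT^*$ composition, integration by parts in $y$, and Schur's test --- is the strategy the paper itself relies on (it defers to \cite[Lemma~2]{xu2021sharp}), and the amplitude bookkeeping for $\partial_y K$, $\partial_y|D|^{z}$ and $\partial_y\psi$, as well as the final split of the $w$-integral at $|x-w|\sim(\lambda\nu\rho_2)^{-1}$, is the right shape.

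The step producing the phase-derivative lower bound, however, has a hole as written. The mean value theorem gives $\Phi'(y)=\lambda(x-w)S''_{xy}(\xi,y)$ for a single $\xi$ between $x$ and $w$, and from $(\xi,y)\in\mathrm{span}(\mathcal{B}_1,\mathcal{B}_2)$ alone you cannot conclude $|S''_{xy}(\xi,y)|\geq\nu$: hypothesis (A1) supplies the lower bound $\nu$ only on $\widetilde{\mathcal{B}_1}$, while on the rest of the span it gives only the \emph{upper} bound $A\nu$, so $\xi$ may land where you have no control from below. You mention the constant sign of $S''_{xy}$ but your argument never uses it, and this is exactly what is missing. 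The correct route is the fundamental theorem of calculus rather than MVT: write $\Phi'(y)=\lambda\int_w^x S''_{xy}(t,y)\,dt$, bring the modulus inside the integral using the constant-sign assumption, and then note that (A2), namely $\mathcal{B}_2\subset\mathcal{B}_1^*$, forces a fixed fraction of the segment from $w$ to $x$ to lie inside $\widetilde{\mathcal{B}_1}$, where $|S''_{xy}|\geq\nu$; this yields $|\Phi'(y)|\geq c\,\lambda\nu|x-w|$ with a harmless constant $c$. This is precisely the reason (A1) and (A2) are packaged together, and the distinction matters only in the genuine two-box orthogonality case (for $\mathcal{B}_1=\mathcal{B}_2$ the issue evaporates). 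With that repair, the remainder of your computation reproduces the cited lemma.
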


\begin{remark}\label{DualLemma}
In the operator \eqref{DampO}, interchanging the roles of $x$ and $y$ and assuming the same assumptions (A1)-(A3), then \eqref{Orth_2-1} and \eqref{Orth_2-2} also hold for operators $D(\mathcal{B}_1)^{*}D(\mathcal{B}_2)$ and $D(\mathcal{B}_2)^{*}D(\mathcal{B}_1)$ respectively. Especially, if $\mathcal{B}_1=\mathcal{B}_2$ which we redenote by $\mathcal{B}$, we have
\begin{equation}\label{DampL2}
	\norm{D(\mathcal{B})}\leq C\lambda^{\frac{\mu}{n}}(\lambda \nu)^{-\frac{1}{2}}
	\sup_{\mathcal{B}}\abs{D(x,y)}^{\Re(z)}.
\end{equation}
\end{remark}
Now we are ready to prove Theorem \ref{dampL2}.
\begin{proof}
Recall that
\begin{equation*}
T_X^zf(x)=\sum_{j\ll k}D^X_{j,k}f(x),\quad
T_Y^zf(x)=\sum_{j\gg k}D^Y_{j,k}f(x).
\end{equation*}
We first prove \eqref{dampL2-2} assuming $\beta\neq 0$. If $j\ll k, j'\ll k'$ and also $j<j'$, then from Lemma \ref{Orth_2}, we know that
\begin{align}
	&\norm{D_{j,k}^X\left(D^X_{j',k'}\right)^{*}}\notag\\
	&\leq C\lambda^{\frac{2\mu}{n}}\left[\lambda 2^{-j(n-2-\beta)}2^{-\beta k}\right]^{-1}\left[2^{-j(n-2-\beta)}\right]^{\frac{n-2\mu-2(1-\mu)(\beta+1)}{2(n-\beta-2)(\beta+1)}}\left[2^{-j'(n-2-\beta)}\right]^{\frac{n-2\mu-2(1-\mu)(\beta+1)}{2(n-2-\beta)(\beta+1)}}\notag\\
	&=C\lambda^{\frac{2\mu-n}{n}}2^{j\frac{2(n-2-\beta)(\beta+1)-[n-2\mu-2(1-\mu)(\beta+1)]}{2(\beta+1)}}2^{-j'\frac{n-2\mu-2(1-\mu)(\beta+1)}{2(\beta+1)}}2^{k\beta}.\label{OscEst-1}
\end{align}
In the inequality above there is no $k'$ because $D_{j,k}^X\left(D^X_{j',k'}\right)^{*}=0$ if $\abs{k-k'}>2$, so we identify $k$ with $k'$ here. In what follows, we also use this to avoid cumbersome argument.\\

On the other hand, we have the trivial size estimate
\begin{align}
	&\norm{D_{j,k}^X\left(D^X_{j',k'}\right)^{*}}\notag\\
	&\leq \norm{D_{j,k}^X}\cdot\norm{\left(D^X_{j',k'}\right)^{*}}\notag\\
	&\leq 2^{-j\frac{n-2\mu-2(1-\mu)(\beta+1)}{2(\beta+1)}}2^{j\mu}2^{-\frac{j}{2}}2^{-\frac{k}{2}}2^{-j'\frac{n-2\mu-2(1-\mu)(\beta+1)}{2(\beta+1)}}2^{j'\mu}2^{-\frac{j'}{2}}2^{-\frac{k}{2}}\notag\\
	&=2^{-j\frac{n-2\mu-(\beta+1)}{2(\beta+1)}}2^{-j'\frac{n-2\mu-(\beta+1)}{2(\beta+1)}}2^{-k}\label{SizEst-1}
\end{align}
By convex combination, for any $\theta(0\leq \theta\leq 1)$, we know that
\begin{align*}
	&\norm{D_{j,k}^X\left(D^X_{j',k'}\right)^{*}}\\
	&\leq {\eqref{OscEst-1}}^{\theta}\cdot{\eqref{SizEst-1}}^{1-\theta}.	
\end{align*}
By setting $\theta=\frac{1}{\beta+1}$, we have
\begin{align*}
	&\norm{D_{j,k}^X\left(D^X_{j',k'}\right)^{*}}\\
	&\leq {\eqref{OscEst-1}}^{\frac{1}{\beta+1}}\cdot{\eqref{SizEst-1}}^{\frac{\beta}{\beta+1}}\\
	&\leq C\lambda^{-\frac{n-2\mu}{2n(\beta+1)}}2^{j\frac{2(n-2-\beta)+\beta-(n-2)}{2(\beta+1)}}2^{-j'\frac{n-2-\beta}{2(\beta+1)}}.	
\end{align*}
Given \eqref{Degree}, it follows
\begin{equation*}
	2(n-2-\beta)+\beta-(n-2)=n-2-\beta.
\end{equation*}
This fact yields
\begin{equation*}
	\norm{D_{j,k}^X\left(D^X_{j',k'}\right)^{*}}\leq C\lambda^{-\frac{n-2\mu}{2n(\beta+1)}}2^{(j-j')\frac{n-2-\beta}{2(\beta+1)}}.
\end{equation*}
Repeat the above argument for $\left(D^X_{j,k}\right)^{*}D_{j',k'}^X$ and assume $k<k'$, there is
\begin{equation*}
	\norm{\left(D^X_{j,k}\right)^{*}D_{j',k'}^X}\leq C\lambda^{-\frac{n-2\mu}{2n(\beta+1)}}2^{(k-k')\frac{\beta}{2(\beta+1)}}.
\end{equation*}
Invoking Cotlar-Stein Lemma we can get \eqref{Orth_2-2}. If $\beta=0$, the inequality above can not guarantee the almost orthogonality, thus we turn to seek for the orthogonality between operators whose supports are in larger regions. Rewrite
\begin{equation*}
T^z_Xf(x)=\sum_{j\ll k}D_{j,k}^Xf(x):=\sum_j D^{X}_jf(x).
\end{equation*}
Suppose $j<j'$, Lemma \ref{Orth_2} implies
\begin{align*}
	&\norm{D_{j}^X\left(D^X_{j'}\right)^{*}}\\
	&\leq C\lambda^{\frac{2\mu}{n}}\left[\lambda 2^{-j(n-2)}\right]^{-1}\left[2^{-j(n-2)}\right]^{\frac{1}{2}}\left[2^{-j'(n-2)}\right]^{\frac{1}{2}}\notag\\
	&=C\lambda^{\frac{2\mu-n}{n}}2^{(j-j')\cdot\frac{n-2}{2}},
\end{align*}
and the almost orthogonality lemma shows that \eqref{Orth_2-2} is ture. Now we proceed to treat $T_Y$, in fact, in this case, for $k\ll j, k'\ll j'$, from Lemma \ref{Orth_2}, we know that
\begin{align}
	&\norm{D_{j,k}^Y\left(D^Y_{j',k'}\right)^{*}}\notag\\
	&\leq C\lambda^{\frac{2\mu}{n}}\left[\lambda 2^{-j\gamma}2^{-(n-2-\gamma) k}\right]^{-1}\left[2^{-j\gamma-k(n-2-\gamma-\beta)}\right]^{\frac{n-2\mu-2(1-\mu)(\beta+1)}{2(n-\beta-2)(\beta+1)}}\cdot\notag\\
	&\quad \left[2^{-j'\gamma-k(n-2-\gamma-\beta)}\right]^{\frac{n-2\mu-2(1-\mu)(\beta+1)}{2(n-2-\beta)(\beta+1)}}\notag\\
	&=C\lambda^{\frac{2\mu-n}{n}}2^{j\gamma\frac{2(n-2-\beta)(\beta+1)-[n-2\mu-2(1-\mu)(\beta+1)]}{2(n-2-\beta)(\beta+1)}}2^{-j'\gamma\frac{n-2\mu-2(1-\mu)(\beta+1)}{2(n-2-\beta)(\beta+1)}}\cdot\notag\\
	&\quad 2^{k\left[(n-2-\gamma)-(n-2-\gamma-\beta)\frac{n-2\mu-2(1-\mu)(\beta+1)}{(n-2-\beta)(\beta+1)}\right]}.\label{OscEst-2}
\end{align}
In view of these fancy exponents, we list them as follows:
\begin{align*}
	&2^{j}:\quad\quad \gamma\frac{2(n-2-\beta)(\beta+1)-[n-2\mu-2(1-\mu)(\beta+1)]}{2(n-2-\beta)(\beta+1)};\\
	&2^{j'}:\quad\quad -\gamma\frac{n-2\mu-2(1-\mu)(\beta+1)}{2(n-2-\beta)(\beta+1)};\\
	&2^{k}:\quad \quad (n-2-\gamma)-(n-2-\beta-\gamma)\frac{n-2\mu-2(1-\mu)(\beta+1)}{(n-2-\beta)(\beta+1)}
\end{align*}
Similarly, we have the trivial size estimate
\begin{align}
		&\norm{D_{j,k}^Y\left(D^Y_{j',k'}\right)^{*}}\notag\\
	&\leq \left[2^{-j\gamma-k(n-2-\gamma-\beta)}\right]^{\frac{n-2\mu-2(1-\mu)(\beta+1)}{2(n-\beta-2)(\beta+1)}}\cdot \left[2^{-j'\gamma-k(n-2-\gamma-\beta)}\right]^{\frac{n-2\mu-2(1-\mu)(\beta+1)}{2(n-2-\beta)(\beta+1)}}2^{k(2\mu-1)}2^{-\frac{j+j'}{2}}\notag\\
	&=2^{-j\left[\gamma\cdot\frac{n-2\mu-2(1-\mu)(\beta+1)}{2(n-\beta-2)(\beta+1)}-\frac{1}{2}\right]}2^{-j'\left[\gamma\cdot\frac{n-2\mu-2(1-\mu)(\beta+1)}{2(n-\beta-2)(\beta+1)}-\frac{1}{2}\right]}2^{-k\left[(n-2-\gamma-\beta)\cdot\frac{n-2\mu-2(1-\mu)(\beta+1)}{2(n-\beta-2)(\beta+1)}+1-2\mu\right]},\label{SizEst-2}
\end{align}
and list these exponents
\begin{align*}
	&2^{j}:\quad\quad -\frac{1}{2}-\gamma\cdot\frac{n-2\mu-2(1-\mu)(\beta+1)}{2(n-\beta-2)(\beta+1)};\\
	&2^{j'}:\quad\quad -\frac{1}{2}-\gamma\cdot\frac{n-2\mu-2(1-\mu)(\beta+1)}{2(n-\beta-2)(\beta+1)};\\
	&2^{k}:\quad \quad 2\mu-1-(n-2-\gamma-\beta)\cdot\frac{n-2\mu-2(1-\mu)(\beta+1)}{(n-\beta-2)(\beta+1)}.
\end{align*}
Again, we use convex combination to obtain
\begin{align*}
	&\norm{D_{j,k}^Y\left(D^Y_{j',k'}\right)^{*}}\\
	&\leq {\eqref{OscEst-2}}^{\frac{1}{\beta+1}}\cdot{\eqref{SizEst-2}}^{\frac{\beta}{\beta+1}}.	
\end{align*}
These dyadic terms and corresponding exponents are as follows.
\begin{align}
	&2^{j}:\quad\quad \frac{\gamma}{\beta+1}-\frac{\beta}{2(\beta+1)}-\gamma\cdot\frac{n-2\mu-2(1-\mu)(\beta+1)}{2(n-\beta-2)(\beta+1)};\tag{a}\\
	&2^{j'}:\quad\quad -\frac{\beta}{2(\beta+1)}-\gamma\cdot\frac{n-2\mu-2(1-\mu)(\beta+1)}{2(n-\beta-2)(\beta+1)};\tag{b}\\
	&2^{k}:\quad \quad \frac{n-2-\gamma}{\beta+1}+\frac{\beta(2\mu-1)}{\beta+1}-(n-2-\gamma-\beta)\cdot\frac{n-2\mu-2(1-\mu)(\beta+1)}{(n-\beta-2)(\beta+1)}.\tag{c}
\end{align} 
After tedious calculation, we can see that
\begin{equation*}
	\text{(a)}+\text{(c)}=\text{-(b)}.
\end{equation*}
On account of $k\ll j<j'$, we can conclude 
\begin{align*}
	\norm{D_{j,k}^X\left(D^X_{j',k'}\right)^{*}}&\leq C\lambda^{-\frac{n-2\mu}{2n(\beta+1)}}2^{j\text{(a)}}2^{j'\text{(b)}}2^{k\text{(c)}}\\
	&\leq C\lambda^{-\frac{n-2\mu}{2n(\beta+1)}}2^{-\text{(b)}(j-j')}.
\end{align*}
A similar estimate also holds for $\left(D^X_{j,k}\right)^{*}D_{j',k'}^X$, then Cotlar-Stein Lemma implies \eqref{dampL2-1}.\\

The proofs of \eqref{dampL2-3} and \eqref{dampL2-4} are similar, the only difference is that when we use convex combination we set $\theta=\frac{1-\mu}{n-2\mu-(1-\mu)(\gamma+1)}$ instead of $\frac{1}{\beta+1}$. Thus we complete our proof.
\end{proof}

To get the $L^p$ estimate, the following endpoint estimates are necessary. The proof can be easily verified and details can be found in \cite{xu2021sharp}.
\begin{theorem}\label{EndPo1}
For the damped operators $T^z_X$ and $T_Y^z$, if $\gamma\neq 0$, whenever 
\begin{equation*}
		D(x,y)=cx^\gamma\prod_{l=1}^s(y-\alpha_lx)^{m_l}\prod_{l=1}^rQ_l(x,y),\quad\quad \Re(z)=-\frac{1-\mu}{n-2-\beta},
\end{equation*}
or
\begin{equation*}
	D(x,y)=cx^\gamma,\quad\quad \Re(z)=-\frac{1-\mu}{\gamma},
\end{equation*}
we always have
\begin{align}
&\norm{T_X^zf}_{L^{1,\infty}}\leq C\norm{f}_{L^1},\label{EndPo-1}\\
&\norm{T_Y^zf}_{L^1}\leq C\norm{f}_{L^1}.\label{EndPo-2}
\end{align}
\end{theorem}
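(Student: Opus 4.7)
The plan is to reduce both endpoint estimates to pointwise bounds on the unoscillated kernels, and then apply Schur's test (Lemma \ref{Schur-Test}) for $T_Y^z \to L^1$ and a one-sided Hardy comparison for $T_X^z \to L^{1,\infty}$. The first step is to compute the size of $|K(x,y)||D(x,y)|^{\Re(z)}$ on each dyadic piece. On $\supp D_{j,k}^X$, the condition $j < k - \mathcal{K}$ forces $x \sim 2^{-j} \gg y \sim 2^{-k}$, so $|x-y|\sim x$, each $|y-\alpha_l x|\sim x$, and each positive definite $|Q_l(x,y)| \sim x^2$. Hence $|D| \sim x^{n-2-\beta}$ in the general case, and the exponent $\Re(z) = -(1-\mu)/(n-2-\beta)$ (or $\Re(z) = -(1-\mu)/\gamma$ in the monomial case) is designed exactly so that $|D|^{\Re(z)} \sim x^{-(1-\mu)}$ cancels the singularity $|K| \lesssim x^{-\mu}$, leaving $|K||D|^{\Re(z)}\lesssim x^{-1}$. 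On $\supp D_{j,k}^Y$, where $y \gg x$, the analogous computation yields $|K||D|^{\Re(z)}\lesssim x^{-\delta}y^{-1+\delta}$ with $\delta = \gamma(1-\mu)/(n-2-\beta)\in(0,1)$ in the general case, and $\lesssim x^{-(1-\mu)} y^{-\mu}$ in the monomial case.

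For $T_Y^z \to L^1$, I would use Schur's test in the $x$-direction: summing the dyadic pieces, the total kernel is supported in $\{0<x<Cy\}$, and
\[
\sup_y \int |K_{T_Y^z}(x,y)|\,dx \;\lesssim\; \sup_y \int_0^{Cy} x^{-\delta} y^{-1+\delta}\,dx \;=\; \frac{1}{1-\delta},
\]
with the analogous bound $\leq 1/\mu$ in the monomial case. Lemma \ref{Schur-Test} then gives $\|T_Y^z f\|_{L^1} \leq C\|f\|_{L^1}$ with $C$ independent of $\lambda$.

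For $T_X^z\to L^{1,\infty}$, the corresponding $x$-integral produces $\int_y^C x^{-1}\,dx = \log(C/y)$, which diverges, so Schur fails. Instead, for each $x$ only $O(1)$ values of $j$ are active in $\sum_j \Psi_j(x)$, and for each such $j$ the $k$-sum runs over $k > j + \mathcal{K}$, localizing $y \in (0, Cx)$. Combining with the $x^{-1}$ kernel bound gives the pointwise Hardy-type estimate $|T_X^z f(x)| \lesssim \chi(x)\, x^{-1} \int_0^{Cx}|f(y)|\,dy$ for a compactly supported cutoff $\chi$. The trivial bound $x^{-1}\int_0^{Cx}|f|\leq \|f\|_{L^1}/x$ then yields $\{x : |T_X^z f(x)|>\alpha\}\subset (0, C\alpha^{-1}\|f\|_{L^1})$, i.e., the desired $\|T_X^z f\|_{L^{1,\infty}}\leq C\|f\|_{L^1}$ with $C$ independent of $\lambda$. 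The main obstacle is purely bookkeeping in the general damping case: one must exploit that $\mathcal{K}$ was chosen (depending on $\alpha_1,\dots,\alpha_s$) to guarantee that in the regime $x \gg y$ (resp.\ $y \gg x$) each $y-\alpha_l x$ is comparable to $x$ (resp.\ $y$) and each $Q_l$ to $x^2$ (resp.\ $y^2$). Once these elementary comparisons are verified, both endpoint estimates reduce to the short computations above, exactly as in \cite{xu2021sharp}.
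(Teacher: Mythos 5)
Your argument is correct and is essentially the proof the paper has in mind (the paper itself defers the details to \cite{xu2021sharp}): discard the oscillation, use the choice of $\Re(z)$ so that the damping $|D|^{\Re(z)}$ reduces the kernel to $|x-y|^{-1}$ (with a harmless $x^{-\delta}y^{\delta}$ imbalance for $T_Y^z$), and conclude with Schur's test in $x$ for the strong $L^1$ bound on $T_Y^z$ and a Hardy-operator level-set estimate for the weak $L^1$ bound on $T_X^z$. The comparability facts you invoke, $|y-\alpha_l x|\sim\max(x,y)$ and $Q_l(x,y)\sim\max(x,y)^2$ on the supports of $T_X^z$ and $T_Y^z$, are precisely what the choice of $\mathcal{K}$ in Section 3 guarantees, so the sketch is complete.
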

For the sake of interpolation, we also need the following lemma with change of power weights. This lifting trick can be found in \cite{pansamsze1997}, see also \cite{shi2018uniform}, \cite{shi2017sharp} for details of proof.
\begin{lemma}\label{InterpolationLem}
Let $dx$ be the Lebesgue measure on $\R$. Assume $V$ is a linear operator defined on all simple functions with respect to $dx$. If there exist two constant $A_1, A_2>0$ such that
\begin{enumerate}
\item $\norm{Vf}_{L^\infty(dx)}\leq A_1\norm{f}_{L^1(dx)}$ for all simple functions $f$,
\item $\norm{\abs{x}^aVf}_{L^{p_0}(dx)}\leq A_2\norm{f}_{L^{p_0}(dx)}$ for some $1<p_0, a\in \R$ satisfying $ap_0\neq -1$,
\end{enumerate}
then for any $\theta\in (0,1)$, there exists a constant $C=C(a,p_0,\theta)$ such that
\begin{equation}
\norm{\abs{x}^bVf}_{L^{p}(dx)}\leq CA_1^{\theta}A_2^{1-\theta}\norm{f}_{L^{p}(dx)}
\end{equation}
for all simple function $f$, where $b$ and $p$ satisfy $b=-\theta+(1-\theta)a$ and $\frac{1}{p}=\theta+\frac{1-\theta}{p_0}$.
\end{lemma}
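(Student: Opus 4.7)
The plan is to combine a real interpolation between the two given endpoints with a H\"older inequality in Lorentz spaces. Direct application of complex interpolation (Riesz-Thorin or Stein-Weiss) to the endpoints $L^1\to L^\infty$ and $L^{p_0}\to L^{p_0}(|x|^{ap_0}dx)$ produces only a bound $L^p\to L^q(|x|^{(1-\theta)aq}dx)$ with $q = p_0/(1-\theta) > p$, which is off the diagonal. The formula $b = -\theta + (1-\theta) a$ encodes an extra factor of $|x|^{-\theta}$ beyond what such a direct interpolation delivers, and this factor must be gained through a Lorentz-space H\"older argument.

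First, apply the Stein-Weiss real interpolation theorem at parameter $(\theta, p)$ to the domain couple $\bigl(L^{p_0}(dx), L^1(dx)\bigr)$ and the target couple $\bigl(L^{p_0}(|x|^{ap_0} dx), L^\infty(dx)\bigr)$. Using $\bigl[L^{p_0}(dx), L^1(dx)\bigr]_{\theta, p} = L^p(dx)$ on the domain and $\bigl[L^{p_0}(|x|^{ap_0} dx), L^\infty(dx)\bigr]_{\theta, p} = L^{q, p}(|x|^{(1-\theta) a q} dx)$ on the target, where $1/q = (1-\theta)/p_0$, one obtains the intermediate weighted Lorentz bound
\begin{equation*}
\bigl\| |x|^{(1-\theta) a} V f \bigr\|_{L^{q, p}(dx)} \leq C_1 A_1^\theta A_2^{1-\theta} \|f\|_{L^p(dx)}.
\end{equation*}
The hypothesis $a p_0 \neq -1$ enters to keep the weight $|x|^{a p_0}$ non-degenerate (in particular, locally integrable to the relevant powers) so that the real-interpolation functional is well defined.

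Second, factor $|x|^b = |x|^{-\theta}\cdot |x|^{(1-\theta) a}$. Since $|x|^{-\theta}$ belongs to the weak Lebesgue space $L^{1/\theta, \infty}(\R)$ with norm depending only on $\theta$ and $\theta + 1/q = 1/p$, the H\"older inequality in Lorentz spaces $L^{1/\theta, \infty}\cdot L^{q, p} \hookrightarrow L^{p, p} = L^p$ yields
\begin{equation*}
\bigl\| |x|^b V f \bigr\|_{L^p} \leq \bigl\| |x|^{-\theta} \bigr\|_{L^{1/\theta, \infty}} \cdot \bigl\| |x|^{(1-\theta) a} V f \bigr\|_{L^{q, p}} \leq C A_1^\theta A_2^{1-\theta} \|f\|_{L^p},
\end{equation*}
which is the claim. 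The main technical point is the use of real (Marcinkiewicz) interpolation to produce the sharper Lorentz target $L^{q, p}$ rather than just $L^q$: it is this refinement that meshes with $|x|^{-\theta}\in L^{1/\theta, \infty}$ via Lorentz H\"older to return to the diagonal $L^p\to L^p$. An alternative route is the ``lifting trick'' $x\mapsto \operatorname{sgn}(x)|x|^{a p_0 + 1}$ of \cite{pansamsze1997}, a homeomorphism of $\R$ precisely because $ap_0\neq -1$, which converts hypothesis (2) into an unweighted Riesz-Thorin setup followed by the reverse change of variables.
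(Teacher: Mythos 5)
The paper does not prove this lemma internally: it only cites \cite{pansamsze1997} (the source of the ``lifting trick'') and refers to \cite{shi2018uniform}, \cite{shi2017sharp} for details, so there is no in-paper argument to compare against. Your overall skeleton---that a direct Riesz--Thorin/Stein--Weiss interpolation lands off-diagonal at $L^p\to L^q$ with $q=p_0/(1-\theta)>p$, that one should instead aim for a Lorentz target $L^{q,p}$, and that the missing factor $|x|^{-\theta}\in L^{1/\theta,\infty}$ is then recovered via H\"older in Lorentz spaces---is the correct conceptual outline, and it is close to what the cited references actually do.

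However, the middle step as written has a genuine bookkeeping gap. Since $L^\infty(dx)=L^\infty(|x|^{ap_0}dx)$ (the two measures are mutually absolutely continuous), the real-interpolation space $\bigl(L^{p_0}(|x|^{ap_0}dx),L^\infty(dx)\bigr)_{\theta,p}$ is the Lorentz space $L^{q,p}(\mu)$ taken \emph{with respect to the weighted measure} $\mu=|x|^{ap_0}\,dx$, i.e.\ a rearrangement-invariant norm relative to $\mu$. This is not the same object as $\bigl\||x|^{(1-\theta)a}\,\cdot\,\bigr\|_{L^{q,p}(dx)}$, the Lebesgue--Lorentz norm of the pointwise product, even though $(1-\theta)aq=ap_0$; the two coincide only when $p=q$. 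Consequently, the subsequent H\"older step, which pairs $|x|^{-\theta}\in L^{1/\theta,\infty}(dx)$ against a Lebesgue--Lorentz norm, does not directly bite on the output of the interpolation. The role of the hypothesis $ap_0\neq-1$ is also misstated: local integrability of $|x|^{ap_0}$ fails whenever $ap_0\leq -1$ and yet the interpolation step is still valid ($\mu$ is $\sigma$-finite); what actually breaks at $ap_0=-1$ is the Lorentz--H\"older step, since $|x|^{-\theta\sigma}$ with $\sigma=ap_0+1$ then degenerates to the constant $1\notin L^{1/\theta,\infty}(\mu)$. The repair is essentially what your ``alternative route'' gestures at, but that route is a prerequisite for, not a substitute for, the Lorentz argument: set $\sigma=ap_0+1\neq 0$ and $\tilde Vf(u)=|\sigma|^{-1/p_0}Vf(\operatorname{sgn}(u)|u|^{1/\sigma})$; check $\tilde V:L^1\to L^\infty$ and $\tilde V:L^{p_0}\to L^{p_0}$; real-interpolate to $\tilde V:L^p\to L^{q,p}(du)$; apply the Lorentz H\"older with $|u|^{-\theta}\in L^{1/\theta,\infty}(du)$; and undo the change of variables, using the identity $bp-ap_0=-\theta p\sigma$ to see that $\||u|^{-\theta}\tilde Vf\|_{L^p(du)}=|\sigma|^{-1/p}\||x|^bVf\|_{L^p(dx)}$. (Equivalently, one may stay with $\mu$ and do both the interpolation and the H\"older step in $(\R,\mu)$.) A Riesz--Thorin interpolation after lifting, as your last sentence suggests, only yields $L^p\to L^q$ and is therefore still not enough on its own.
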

We have finished the preparation works for $T_X$ and $T_Y$, it remains to deal with $T_\Delta$. The crucial observation for $T_\Delta$ is that along the lines, on which the Hessian vanish, orthogonaltiy ensure that we can reduce the infinite sum into a finite sum. Thus we claim that
\begin{align}
	\|T_\Delta f\|_{L^p}&\leq C\lambda^{-\frac{1-\mu}{n}}\|f\|_{L^p},\quad \quad \frac{n-2\mu}{n-2\mu-(\beta+1)(1-\mu)}\leq p\leq \frac{n-2\mu}{(1-\mu)(\gamma+1)}.\label{T_DelEnd}
\end{align}

This result in fact has been essentially given by \cite{xu2021sharp}, in which $\gamma=0, \beta=0$. If we repeat that proof line by line, we can see that the vulues of $\gamma$ and $\beta$ have no effect on our result, so we omit the tedious manipulation here.\\

Now we give the proof of Theorem \ref{Main-Result} in the case $\gamma\neq 0$ or $\beta\neq 0$.
\begin{proof}
In Theorem \ref{Main-Result}, observe the range of $p$, we first assume $ \frac{n-2\mu}{(1-\mu)(\gamma+1)}\leq 2$, then 
\begin{equation*}
	\gamma+1\geq \frac{n-2\mu}{2(1-\mu)}.
\end{equation*}
Given \eqref{Degree}, this also implies
\begin{equation*}
	\beta+1\leq \frac{n-2\mu}{2(1-\mu)}-\frac{(n-2)\mu+2(1-\mu)}{1-\mu}<\frac{n-2\mu}{2(1-\mu)}.
\end{equation*}
For the right endpoint  $p=\frac{n-2\mu}{(1-\mu)(\gamma+1)}$,we choose the damping factor as \eqref{DamFac2}. Combining \eqref{dampL2-3} with \eqref{EndPo-2} and using Stein's complex interpolation yields \eqref{Main-Result} for $T_Y$, while applying Lemma \ref{InterpolationLem} to \eqref{dampL2-4} together with \eqref{EndPo-1} implies \eqref{Main-Result} for $T_X$. The other endpoint $p=\frac{n-2\mu}{n-2\mu-(\beta+1)(1-\mu)}$ can be similary derived by choosing the damping factor as \eqref{DamFac1} and using Stein's complex interpolation or the lifting trick Lemma \ref{InterpolationLem}.
Thus we complete the argument when $ \frac{n-2\mu}{(1-\mu)(\gamma+1)}\leq 2$. \\

Now we proceed with $\frac{n-2\mu}{n-2\mu-(\beta+1)(1-\mu)}\leq 2, \frac{n-2\mu}{(1-\mu)(\gamma+1)}>2$ and reduce them as
\begin{equation*}
		\gamma+1< \frac{n-2\mu}{2(1-\mu)},\quad 	\beta+1\leq \frac{n-2\mu}{2(1-\mu)}.
\end{equation*} 
The left endpoint $p=\frac{n-2\mu}{n-2\mu-(\beta+1)(1-\mu)}$ can be obtained by the same argument above wheras the right endpoint $p=\frac{n-2\mu}{(1-\mu)(\gamma+1)}$ shall be given by duality argument. Specifically, if we desire \eqref{Main-Inq} for $p>2$, it suffices to establish
\begin{equation*}
	\norm{T^*g}_{L^{p'}}\leq C\norm{g}_{L^{p'}},
\end{equation*}
where the adjoint operator $T^*$ is similar with $T$ and defined by
\begin{equation}
	T^{*}g(y)=\int_{\R} e^{-i\lambda S(x,y)}\overline{K(x,y)} \overline{\psi(x,y)}g(x)dx.
\end{equation}
Compare this operator with \eqref{OSO}, they are essentially same by interchanging the roles of $x$ and $y$. So the disired result natually hold if we replace $x$ with $y$, $\gamma$ with $\beta$ in all the above arguments. As for the last case $\frac{n-2\mu}{n-2\mu-(\beta+1)(1-\mu)}> 2, \frac{n-2\mu}{(1-\mu)(\gamma+1)}>2$, \eqref{Main-Inq} follows by duality argument. Therefore we complete our proof.
\end{proof}

\section{Case 2: The Hessian is of the form $c(y-x)^{n-2}$.}
For the remaining case, i.e. $S_{xy}^{''}(x,y)=c(y-x)^{n-2}$, unlike the above arguments, we shall not seperate the operators and turn to establish corresponding estimates on the space which is equipped with more delicate localized properties, for intance, Hardy space $H^1$. The strategy to prove \eqref{Main-Result} in this case is applying Stein's complex interpolation to a class of complex operators for which we establish $L^2\to L^2$ and $H^1\to L^1$ boundedness results with respect to different complex exponents.\\

The class of complex operators we consider here are of the form
	\begin{equation*}
	D_\lambda^z f(x)=\int_{\R}e^{iS(x,y)}K(x,y)\psi(x,y)\abs{x-y}^{z}\left[1-\phi(\lambda^{\frac{1}{n}}(x-y))\right]f(y)dy.
\end{equation*}
Observe that the support of this operator is outside a neighborhood, having width about $\lambda^{-\frac{1}{n}}$, of the line $y=x$. Thus this operator is essentially a nondegenerate oscillatory integral operator. So the $L^2\to L^2$ decay estimates are comparably easy to prove. In what follows we always assume the Hessian is of the form $c(y-x)^{n-2}$.
\begin{theorem}\label{SpecL2}
If $\Re(z)=\frac{n-2}{2}$, then
\begin{equation}\label{SpecDampL2-1}
		\norm{D_\lambda^z f}_{L^2(\mathbb{R})}\leq C\lambda^{\frac{\mu}{n}-\frac{1}{2}}\norm{f}_{L^2(\mathbb{R})},
\end{equation}
and if $\mu-1\leq \Re(z)<\frac{n-2}{2}$, then
	\begin{equation}\label{SpecDampL2-2}
	\norm{D_\lambda^z f}_{L^2(\mathbb{R})}\leq C\lambda^{\frac{\mu-1-\Re(z)}{n}}\norm{f}_{L^2(\mathbb{R})}.
\end{equation}
\end{theorem}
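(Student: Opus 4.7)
My plan is to dyadically decompose $D_\lambda^z$ in the variable $x-y$ and reduce each piece to a non-degenerate oscillatory estimate. The cutoff $1-\phi(\lambda^{1/n}(x-y))$ restricts to $|x-y| \gtrsim \lambda^{-1/n}$ while $\supp \psi$ forces $|x-y| \lesssim 1$, so, inserting a smooth partition of unity $\sum_l \chi_l(x-y) \equiv 1$ with $\chi_l$ supported on $|x-y| \sim 2^{-l}$, I write $D_\lambda^z = \sum_l D_l$ with $l$ ranging effectively between a fixed lower bound and $l_0 := \frac{1}{n}\log_2 \lambda$.

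On the support of $D_l$ the Hessian $|S^{''}_{xy}| = |c(y-x)^{n-2}| \sim 2^{-l(n-2)}$ is nondegenerate with $\lambda \cdot 2^{-l(n-2)} \gtrsim \lambda^{2/n} \gg 1$, and $|K(x,y)| \lesssim 2^{l\mu}$ while the damping factor $\bigl| |x-y|^z \bigr| \sim 2^{-l\Re(z)}$. Covering this dyadic region by rectangular boxes satisfying (A1)--(A3) and applying the operator van der Corput estimate \eqref{DampL2} of Remark \ref{DualLemma}, refined by using the sharper dyadic bound $|K| \lesssim 2^{l\mu}$ in place of the cruder $\lambda^{\mu/n}$, gives
$$\|D_l\|_{L^2 \to L^2} \leq C \cdot 2^{l\mu} \cdot \bigl(\lambda \cdot 2^{-l(n-2)}\bigr)^{-1/2} \cdot 2^{-l\Re(z)} = C \lambda^{-1/2}\, 2^{l(\mu + (n-2)/2 - \Re(z))}.$$

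Summing over $l$ by the triangle inequality then suffices. Under either hypothesis on $\Re(z)$ the exponent $\mu + (n-2)/2 - \Re(z)$ is strictly positive, so the sum is a geometric series dominated by its largest term at $l = l_0$. When $\Re(z) = (n-2)/2$ this gives $\lambda^{-1/2} \cdot \lambda^{\mu/n} = \lambda^{\mu/n - 1/2}$, matching \eqref{SpecDampL2-1}. When $\mu - 1 \leq \Re(z) < (n-2)/2$ the same argument yields $\lambda^{-1/2 + (\mu + (n-2)/2 - \Re(z))/n} = \lambda^{(\mu - 1 - \Re(z))/n}$, matching \eqref{SpecDampL2-2}; the lower restriction $\Re(z) \geq \mu - 1$ is precisely what keeps this power of $\lambda$ nonpositive.

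I expect the main technical burden to be verifying the amplitude regularity on each dyadic piece needed to invoke the operator van der Corput estimate: derivatives of $|x-y|^z$ and $\chi_l$ in $y$ produce factors of order $2^l$, but these are exactly compensated by the $y$-width of the relevant boxes at scale $2^{-l}$, so condition (A3) is preserved after rescaling. Beyond this bookkeeping, no almost orthogonality between different scales $l$ is required, because the geometric series is already dominated by the endpoint scale $l_0$.
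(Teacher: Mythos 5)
Your proof is essentially correct and takes a genuinely different (and cleaner) route than the paper for the critical exponent $\Re(z)=\frac{n-2}{2}$. The paper proves \eqref{SpecDampL2-2} exactly as you do — a dyadic sum over $|x-y|\sim 2^{-m}$ using \eqref{DampL2} with the crude kernel bound $\lambda^{\mu/n}$, which converges since the exponent $\frac{n-2}{2}-\Re(z)$ is strictly positive. For \eqref{SpecDampL2-1} that exponent vanishes and the same crude sum loses a $\log\lambda$; the paper therefore switches strategies, splitting $D^z_\lambda=T^z_X+T^z_Y+T^z_\Delta$, invoking Theorem \ref{dampL2} with $\beta=0$ for $T^z_X,T^z_Y$, and running a fresh Cotlar--Stein argument over the diagonal scales $m$ for $T^z_\Delta$. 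Your observation — that restricting to the strip $|x-y|\sim 2^{-l}$ lets one replace $\lambda^{\mu/n}$ by the sharper $2^{l\mu}$, turning the borderline sum into a genuine geometric series with ratio $2^{\mu}>1$ dominated at $l=l_0$ — sidesteps the entire $T_X/T_Y/T_\Delta$ decomposition and the Cotlar--Stein step, and handles both parts of the theorem uniformly. The one point you should make explicit rather than gesture at: \eqref{DampL2} as stated in the paper carries the fixed factor $\lambda^{\mu/n}$, so your refined per-scale bound is not a direct citation; you need to re-derive the local estimate with $\sup_{\mathcal{B}}|K|$ in place of $\lambda^{\mu/n}$, checking that on a $2^{-l}\times 2^{-l}$ box the amplitude condition (A3) is met (your scaling check for $|x-y|^z$ and $\chi_l$ is right, but the same check must be done for $\partial_y K$, where $|\partial_y K|\lesssim 2^{l(\mu+1)}$ is precisely compensated by $\rho_2\sim 2^{-l}$), and that the finitely-overlapping boxes tiling the strip at a fixed scale $l$ contribute only a bounded constant by disjointness of their $x$- and $y$-supports. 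Once that refinement is written out, your argument is complete and arguably simpler than the published one.
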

\begin{proof}
We first prove \eqref{SpecDampL2-2} because it is comparably easy. We decompose the operator $D^z_\lambda$ as
\begin{equation*}
	D^z_\lambda f(x)=\sum_k\int_{\R}e^{i\lambda(x-y)^n}K(x,y)\psi(x,y)\abs{x-y}^{z}\Psi_m(y-x)\left[1-\phi(\lambda^{\frac{1}{n}}(x-y))\right]f(y)dy,
\end{equation*} 
where the function $\Psi_m$ is same with what we have used in Section 2. On account of the support of $\left[1-\phi(\lambda^{\frac{1}{n}}(x-y))\right]$, the sum over $m$ is in fact a finite sum and $m\lesssim \log(\lambda^{\frac{1}{n}})$. We now invoke \eqref{DampL2} and obtain
\begin{align*}
	\norm{D_\lambda^z}&\lesssim \sum_m \lambda^{\frac{\mu}{n}}\left(\lambda 2^{-m(n-2)}\right)^{-\frac{1}{2}}(2^{-m})^{\Re(z)}\\
	&\lesssim \lambda^{\frac{\mu-1-\Re(z)}{n}}.
\end{align*}
Therefore we arrive at \eqref{SpecDampL2-2}. Now we turn to give \eqref{SpecDampL2-1}. Recall the notations of \eqref{T_X} and \eqref{T_Y}, we rewrite $D_\lambda^z$ as
\begin{align*}
	D_\lambda^zf(x)&=T_Y^zf(x)+T_X^zf(x)+\sum_{j\sim k}\int_{\R}e^{i\lambda S(x,y)}K(x,y)\abs{y-x}^{z}\left[1-\phi\left(\lambda^{\frac{1}{n}}(x-y)\right)\right]\cdot\\
	&\quad\Psi_j(x)\Psi_k(y)\psi(x,y)f(y)dy\\
	&:=T_Y^zf(x)+T_X^zf(x)+T_\Delta^zf(x).
\end{align*} 
By setting $\beta=0$, \eqref{dampL2-1} and \eqref{dampL2-2} have implied the correponding estimates for $T_Y^z$ and $T_X^z$ above. So it suffices to verify \eqref{SpecDampL2-1} for $T_\Delta^z$. Since
\begin{align*}
	T_\Delta^zf(x)&=\sum_{j\sim k}\int_{\R}e^{i\lambda S(x,y)}K(x,y)\abs{y-x}^{z}\left[1-\phi\left(\lambda^{\frac{1}{n}}(x-y)\right)\right]
	\Psi_j(x)\Psi_k(y)\psi(x,y)f(y)dy\\
	&:=\sum_{j\sim k}D_{j,k}^{\Delta}f(x),
\end{align*}
then by orthogonality, it suffices to focus on one such $D_{j,k}^{\Delta}$. So we further decompose $D_{j,k}^{\Delta}$ as
\begin{align*}
	D_{j,k}^{\Delta}f(x)&=\sum_{m}\int_{\R}e^{i\lambda S(x,y)}K(x,y)\abs{y-x}^{z}\left[1-\phi\left((x-y)\lambda^{\frac{1}{n}}\right)\right]\Psi_m(x-y)
	\Psi_j(x)\Psi_k(y)\cdot\\
	&\quad\psi(x,y)f(y)dy\\
	&:=\sum_m D_{j,k,m}^{\Delta}f(x).
\end{align*}
We apply Lemma \ref{Orth_2} to $D_{j,k,m}^{\Delta}$ and get
\begin{equation*}
	\norm{D_{j,k,m}^{\Delta}\left(D_{j,k,m'}^{\Delta}\right)^*}\lesssim \lambda^{\frac{2\mu}{n}-1}2^{-|m-m'|/2}.
\end{equation*}
Similar estimates also hold for $\left(D_{j,k,m}^{\Delta}\right)^*D_{j,k,m'}^{\Delta}$ by interchanging the roles of $x$ and $y$. Therefore $\norm{D_{j,k}^{\Delta}}\lesssim \lambda^{\frac{\mu}{n}-\frac{1}{2}}$, this completes the proof.
\end{proof}
To establish the $L^p$ estimate, we also need to prove that the damped oscillatory integral operator with critical negative exponent maps $H^1(\mathbb{R})$ into $L^1(\mathbb{R})$. 
\begin{theorem}
If $\Re(z)=\mu-1$, then
	\begin{equation}
		\norm{D_\lambda^z f}_{L^1(\mathbb{R})}\leq C\norm{f}_{H^1(\mathbb{R})}.
	\end{equation}
\end{theorem}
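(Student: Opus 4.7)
I would prove the $H^1(\R)\to L^1(\R)$ boundedness of $D_\lambda^z$ at $\Re(z)=\mu-1$ via the atomic characterization of $H^1$. It suffices to show $\|D_\lambda^z a\|_{L^1}\leq C$ uniformly in $\lambda$ over all $H^1$-atoms $a$ supported on an interval $B=B(y_0,r)$ with $\|a\|_\infty\leq r^{-1}$ and $\int a=0$. Since the kernel of $D_\lambda^z$ is supported on $\{|x-y|\geq\tfrac12\lambda^{-1/n}\}$, atoms with $r\lesssim\lambda^{-1/n}$ reduce, after a standard dilation and thickening, to atoms at the cutoff scale, so one may concentrate on the case $r\gtrsim\lambda^{-1/n}$.

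\textbf{Near/far split.} I would decompose $\|D_\lambda^z a\|_{L^1}=I_{\rm near}+I_{\rm far}$ with $I_{\rm near}$ the part on $\{|x-y_0|\leq C_1 r\}$. By Cauchy--Schwarz together with the $L^2$ bound \eqref{SpecDampL2-2} at $\Re(z)=\mu-1$ (which gives $\|D_\lambda^z\|_{L^2\to L^2}\leq C$),
\begin{equation*}
 I_{\rm near}\leq (2C_1 r)^{1/2}\,\|D_\lambda^z a\|_{L^2}\leq C\,r^{1/2}\|a\|_{L^2}\leq C,
\end{equation*}
since $\|a\|_{L^2}\leq r^{-1/2}$. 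For $I_{\rm far}$ I would use the cancellation $\int a=0$ to write
\begin{equation*}
 D_\lambda^z a(x)=\int\bigl[\mathcal{K}(x,y)-\mathcal{K}(x,y_0)\bigr]a(y)\,dy,\qquad \mathcal{K}(x,y)=e^{i\lambda S(x,y)}\tilde K(x,y),
\end{equation*}
where $\tilde K(x,y)=K(x,y)\psi(x,y)|x-y|^{\mu-1}[1-\phi(\lambda^{1/n}(x-y))]$, and split the kernel difference into a \emph{symbol} contribution $e^{i\lambda S(x,y)}[\tilde K(x,y)-\tilde K(x,y_0)]$ and a \emph{phase} contribution $\tilde K(x,y_0)[e^{i\lambda S(x,y)}-e^{i\lambda S(x,y_0)}]$. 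Using \eqref{SinCond} and the elementary estimate for $\partial_y|x-y|^{\mu-1}$, the symbol part is $\lesssim r|x-y_0|^{-2}$ on the far region and is uniformly $L^1(dx)$-integrable (the boundary term produced by $\partial_y\phi$ is supported on a set of measure $\lambda^{-1/n}\lesssim r$ and is easily absorbed).

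\textbf{Phase variation and main obstacle.} The principal difficulty is the phase contribution. The requirement that the Hessian equals $c(y-x)^{n-2}$ forces $S(x,y)=c'(y-x)^n$ up to the omitted pure-$x$ and pure-$y$ monomials, so $|S_y|\sim|x-y|^{n-1}$ and $|e^{i\lambda S(x,y)}-e^{i\lambda S(x,y_0)}|\leq \min(2,C\lambda r|x-y_0|^{n-1})$. In the regime $r\gtrsim\lambda^{-1/n}$, the product $\lambda r|x-y_0|^{n-1}\geq\lambda r^n\geq 1$ throughout $\{|x-y_0|>C_1 r\}$, so the small-angle bound does not apply and the trivial bound $2\,|\tilde K(x,y_0)|\sim|x-y_0|^{-1}$ is logarithmically divergent on $\supp\psi$. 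The key step is therefore to recover a gain of $[\lambda|x-y_0|^{n-1}]^{-1}$ from the rapid oscillation of $e^{i\lambda S(x,y)}$ in $y$ across $B$. I would do this by going back to the oscillatory integral $\int e^{i\lambda S(x,y)}\tilde K(x,y)a(y)\,dy$ directly and staging a non-stationary-phase argument: writing $a=A'$ with primitive $A(y)=\int_{-\infty}^y a(s)\,ds$ compactly supported in $B$ and bounded by $\|a\|_{L^1}\leq 1$, and then integrating by parts using $A$ (not $a$, whose derivative is uncontrolled) combined with the phase identity $e^{i\lambda S}=(i\lambda S_y)^{-1}\partial_y(e^{i\lambda S})$ so as to extract an amplitude factor $(\lambda|S_y|)^{-1}$. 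The technical obstacle is that the most naive IBPs are circular (a phase-IBP applied to $\lambda\int e^{i\lambda S}S_y\tilde K A$ returns $D_\lambda^z a$); the right arrangement has to split the $S_y$-term from the very beginning using a refined partition or an explicit oscillatory-integral estimate (van der Corput with primitive $A$ playing the role of the amplitude), producing an integrable tail of order $[\lambda|x-y_0|^{n-1}]^{-1}|x-y_0|^{-1}$ on the far region. Combining this bound with the symbol and near-field estimates gives the uniform $\|D_\lambda^z a\|_{L^1}\leq C$ and completes the proof.
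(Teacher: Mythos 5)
Your overall plan is structurally the same as the paper's: reduce to atoms, split into a near region controlled by the $L^2$ bound plus Cauchy--Schwarz, and a far region where the kernel difference $\mathcal{K}(x,y)-\mathcal{K}(x,y_0)$ is split into a ``symbol'' part (handled by the mean value theorem on $\tilde K$) and a ``phase'' part (the pure oscillatory integral). The symbol and near-field parts of your argument are fine and match the paper's $I_1$, $I_3$, $I_5$.

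The gap is precisely in the step you flag yourself as problematic: the phase contribution on the far region. You correctly identify that the naive mean-value bound $|e^{i\lambda S(x,y)}-e^{i\lambda S(x,y_0)}|\leq\min(2,C\lambda r|x-y_0|^{n-1})$ does not help once $\lambda r|x-y_0|^{n-1}\gtrsim 1$, and that a single integration by parts using the primitive $A$ is circular. But the proposed fix --- ``van der Corput with primitive $A$ playing the role of the amplitude, producing an integrable tail of order $[\lambda|x-y_0|^{n-1}]^{-1}|x-y_0|^{-1}$'' --- is not established, and in fact that bound is not achievable by the sketch: a full $\lambda^{-1}|x-y_0|^{-(n-1)}$ gain would require $C^1$ control of the amplitude, which an $H^1$-atom does not have, and the IBP you write down only exchanges $a$ for $A$ at the cost of a factor $\lambda|x-y|^{n-1}$, returning you to the small-angle bound. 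Moreover, even if such a tail bound held, integrating $\lambda^{-1}|x-y_0|^{-n}$ over $|x-y_0|>C_1 r$ gives $\lambda^{-1}r^{-(n-1)}$, which is $O(1)$ only when $r\gtrsim\lambda^{-1/(n-1)}$, leaving the atom scales $\lambda^{-1/n}\lesssim r\lesssim\lambda^{-1/(n-1)}$ uncovered. The paper closes this step by a different mechanism: it introduces an auxiliary radius $\rho$ determined by $\lambda|I|\rho^{n-1}=1$, uses the mean-value/cancellation bound on $\{2|I|<|x-C_I|<\rho\}$ (your $I_7$), and on $\{|x-C_I|\geq\rho\}$ performs a dyadic decomposition, rescales, and invokes the uniform operator van der Corput estimate of Carbery--Christ--Wright to get a small $\epsilon$-power gain $(\lambda|I|(2^l\rho)^{n-1})^{-\epsilon}$ per dyadic annulus, which sums geometrically. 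That $\epsilon$-power gain, not a full $\lambda^{-1}$ gain, is the correct amount available with only $L^2$ control of the atom, and it is exactly enough. Without this quantitative van der Corput input and the dyadic summation your argument does not close.
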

\begin{proof}
Suppose that a $H^1$-atom $b$ satisfies
	\begin{align*}
		&\supp{b}\subset I:=\left[C_I-\frac{\abs{I}}{2},C_I-\frac{\abs{I}}{2}\right],\\
		&\norm{b}_{L^\infty}\leq \frac{1}{\abs{I}},\\
		&\int_{I}b(y)dy=0.
	\end{align*}
	Therefore
	\begin{align*}
		\norm{D_\lambda b}_{L^1(\mathbb{R})}=&\int_{\R}\abs{D_\lambda b(x)}dx\\
		=&\int_{\abs{x-C_I}\leq 2\abs{I}}\abs{D_\lambda b(x)}dx+\int_{\abs{x-C_I}> 2\abs{I}}\abs{D_\lambda b(x)}dx\\
		:=&I_1+I_2.
	\end{align*} 
	By H\"{o}lder's inequality, we know that
	\begin{equation*}
		I_1\leq \abs{I}^{\frac{1}{2}}\norm{D_\lambda b}_{L^2}\leq C\abs{I}^\frac{1}{2}\norm{b}_{L^2}\leq C.
	\end{equation*}
	The second inequality results from Theroem \ref{SpecL2}. We now focus on $I_2$. For simplicity, we set
	\begin{equation*}
		\Phi(x,y)=K(x,y)\psi(x,y)\abs{x-y}^{\mu-1}.
	\end{equation*}
	Thus
	\begin{align*}
		I_2\leq &\int_{\abs{x-C_I}> 2\abs{I}}\abs{\int_{\R}e^{i\lambda(x-y)^n}\left[\Phi(x,y)-\Phi(x,C_I)\right]\left[1-\phi(\lambda^{\frac{1}{n}}(x-y))\right]b(y)dy}dx+\\
		&\int_{\abs{x-C_I}> 2\abs{I}}\abs{\int_{\R}e^{i\lambda(x-y)^n}\Phi(x,C_I)\left[1-\phi(\lambda^{\frac{1}{n}}(x-y))\right]b(y)dy}dx\\
		:=&I_3+I_4.
	\end{align*}
	To bound $I_3$, we need to analyze the difference between $\Phi(x,y)$ and $\Phi(x,C_I)$. In fact, from mean value theorem, we know that
	\begin{align*}
		\abs{\Phi(x,y)-\Phi(x,C_I)}\leq &C\left(\abs{\partial_yK(x,
			\xi)}\abs{x-\xi}^{\mu-1}\abs{\psi(x,\xi)}+\abs{K(x,
			\xi)}\abs{x-\xi}^{\mu-2}\abs{\psi(x,\xi)}+\right.\\
		&\left.\abs{K(x,
			\xi)}\abs{x-\xi}^{\mu-1}\abs{\partial_y\psi(x,\xi)}\right)\abs{y-C_I}\\
		\lesssim &\abs{I}\left(\abs{\partial_yK(x,
			\xi)}\abs{x-\xi}^{\mu-1}\abs{\psi(x,\xi)}+\abs{K(x,
			\xi)}\abs{x-\xi}^{\mu-2}\abs{\psi(x,\xi)}+\right.\\
		&\left.\abs{K(x,
			\xi)}\abs{x-\xi}^{\mu-1}\abs{\partial_y\psi(x,\xi)}\right)\\
		\leq &\abs{I}\left(\abs{x-\xi}^{-2}\norm{\psi}_{L^\infty}+\abs{x-\xi}^{-2}\norm{\psi}_{L^\infty}+\abs{x-\xi}^{-1}\norm{\partial_y\psi(x,\cdot)}_{L^1}\right).
	\end{align*}
	Taking absolute value for every term in the integrand of $I_3$, by means of the upper bound above, we can conclude that
	\begin{align*}
		I_3\lesssim &\int_{\abs{x-C_I}\geq 2\abs{I}}\abs{I}\left(2\abs{x-\xi}^{-2}\norm{\psi}_{L^\infty}+\abs{x-\xi}^{-1}\norm{\partial_y\psi(x,\cdot)}_{L^1}\right)dx\\
		\lesssim &\abs{I}\left(2\abs{I}^{-1}\norm{\psi}_{L^\infty}+\abs{I}^{-1}\norm{\partial_y\psi}_{L^1}\right)\\
		\leq &C.
	\end{align*} 
	For $I_4$, according to the length of the interval $I$, we divide our argument into two different cases.\\
	\emph{Case 1:}$\abs{I}\geq 1$.
	On account of the support of $1-\phi(\lambda^{\frac{1}{n}}(x-y))$, and also
	\begin{equation*}
		\abs{\Phi(x,C_I)}\lesssim \abs{x-C_I}^{-1},
	\end{equation*}
	H\"{o}lder's inequality implies
	\begin{align*}
		I_4\lesssim &\left(\int_{\abs{x-C_I}>\max\{2\abs{I},\lambda^{-\frac{1}{n}}\}}\abs{x-C_I}^{-2}dx\right)\lambda^{-\frac{1}{n}}\norm{b}_{L^2}\\
		\lesssim &\min\left\{\lambda^{\frac{1}{2n}}, \abs{I}^{\frac{1}{2}}\right\}\lambda^{-\frac{1}{n}}\abs{I}^{-\frac{1}{2}}\\
		\leq &C.
	\end{align*}
	Otherwise, if $\abs{I}<1$, then
	\begin{align*}
		I_4\leq &\int_{\abs{x-C_I}> 2\abs{I}}\abs{x-C_I}^{-1}\abs{\int_{\R}e^{i\lambda(x-y)^n}\left[1-\phi(\lambda^{\frac{1}{n}}(x-y))\right]b(y)dy}dx\\
		\leq &\int_{\abs{x-C_I}> 2\abs{I}}\abs{x-C_I}^{-1}\abs{\int_{\R}e^{i\lambda(x-y)^n}\left[\phi(\lambda^{\frac{1}{n}}(x-C_I))-\phi(\lambda^{\frac{1}{n}}(x-y))\right]b(y)dy}dx+\\
		&\int_{\abs{x-C_I}> 2\abs{I}}\abs{x-C_I}^{-1}\abs{\int_{\R}e^{i\lambda(x-y)^n}\left[1-\phi(\lambda^{\frac{1}{n}}(x-C_I))\right]b(y)dy}dx\\
		:=&I_5+I_6.
	\end{align*}
	Similar to what we have done for $I_3$, it is necessary to analyze the difference between $\phi(\lambda^{\frac{1}{n}}(x-C_I))$ and $\phi(\lambda^{\frac{1}{n}}(x-y))$. By means of mean value theorem, it is easy to check that
	\begin{equation*}
		\abs{\phi(\lambda^{\frac{1}{n}}(x-C_I))-\phi(\lambda^{\frac{1}{n}}(x-y))}\leq \lambda^{\frac{1}{n}}\abs{\partial_y\phi(\lambda^{\frac{1}{n}}(x-\xi))}\abs{y-C_I}.
	\end{equation*}
	Since the support of $\partial_y\phi(\lambda^{\frac{1}{n}}(x-y))$ is restricted in the region $\abs{x-\xi}\approx \lambda^{-\frac{1}{n}}$, so 
	\begin{align*}
		I_5\leq &\int_{\abs{x-C_I}> 2\abs{I}} \abs{x-C_I}^{-1}\lambda^{\frac{1}{n}}\abs{\partial_y\phi(\lambda^{\frac{1}{n}}(x-\xi))}\abs{y-C_I}\abs{y-C_I}\norm{b}_{L^1}dx\\
		\leq &\abs{I}^{-1}\norm{\partial_y\psi}_{L^\infty}\abs{I}\lambda^{\frac{1}{n}}\lambda^{-\frac{1}{n}}\\
		\leq & C.
	\end{align*}
	It remains to deal with $I_6$, actually
	\begin{align*}
		I_6\leq & \int_{\abs{x-C_I}>\max\{2\abs{I},\lambda^{-\frac{1}{n}}\}}\abs{x-C_I}^{-1}\abs{\int e^{i\lambda(x-y)^n}b(y)dy}dx\\
		=&\int_{r>\abs{x-C_I}>\max\{2\abs{I},\lambda^{-\frac{1}{n}}\}}\abs{x-C_I}^{-1}\abs{\int e^{i\lambda(x-y)^n}b(y)dy}dx+\\
		&\int_{r\leq\abs{x-C_I}}\abs{x-C_I}^{-1}\abs{\int e^{i\lambda(x-y)^n}b(y)dy}dx\\
		:=&I_7+I_8.
	\end{align*}
	Here, $r$ is a parameter which will be determined later. Since
	\begin{equation*}
		\abs{e^{i\lambda(x-y)^n}-e^{i\lambda(x-C_I)^n}}\leq \abs{\lambda(x-\xi)^{n-1}}\abs{y-C_I},
	\end{equation*}
	then by the vanishing property of the atom $b$, we have
	\begin{align*}
		I_7=&\int_{r>\abs{x-C_I}>\max\{2\abs{I},\lambda^{-\frac{1}{n}}\}}\abs{x-C_I}^{-1}\abs{\int \left[e^{i\lambda(x-y)^n}-e^{i\lambda(x-C_I)^n}\right]b(y)dy}dx\\
		\leq & \int_{r>\abs{x-C_I}>\max\{2\abs{I},\lambda^{-\frac{1}{n}}\}}\abs{x-C_I}^{-1}\abs{\lambda(x-\xi)^{n-1}}\abs{y-C_I}\norm{b}_{L^1}dx\\
		\leq &\int_{r>\abs{x-C_I}>\max\{2\abs{I},\lambda^{-\frac{1}{n}}\}}\lambda\abs{x-C_I}^{n-2}\abs{I}\norm{b}_{L^1}dx\\
		\lesssim &\lambda\abs{I}r^{n-1}.
	\end{align*}
	If we set 
	\begin{equation*}
		\lambda\abs{I}r^{n-1}=1,
	\end{equation*}
	then $I_7\leq 1.$ Now we continue to treat $I_8$, by change of variables, 
	\begin{align*}
		I_8\approx &\sum_{l\geq 1}\int_{2^lr\leq \abs{u}\leq 2^{l+1}r}\frac{1}{2^lr}\abs{\int e^{i\lambda(u-\abs{I}v)^n}\abs{I}b\left(\abs{I}v+C_I\right)dv}du\\
		=&\sum_{l\geq 1}\int_{1\leq \abs{\tilde{u}}\leq 2}\abs{\int e^{i\lambda(2^lr\tilde{u}-\abs{I}v)^n}\abs{I}b\left(\abs{I}v+C_I\right)dv}d\tilde{u}\\
		\leq &\sum_{l\geq 1}\left(\lambda\abs{I}\left(2^lr\right)^{n-1}\right)^{-\epsilon}\norm{\abs{I}b\left(\abs{I}v+C_I\right)}_{L^2(dv)}\\
		\leq &\left(\sum_{l\geq 1}2^{-\epsilon l}\right)\abs{I}\abs{I}^{-1}\left(\lambda\abs{I}r^{n-1}\right)^{-\epsilon}\\
		\lesssim &1.
	\end{align*}
	Here we employ the uniform operator van der Corput lemma in \cite{carbery1999multidimensional}, thus our proof is complete.

\end{proof}

{\bf Acknowledgement:} The author would like to acknowledge financial support from Jiangsu Natural Science Foundation, Grant No. BK20200308.


\end{document}